\newenvironment{Proof}{\vspace{1.5ex}{\sc Proof}.}{\vspace{2ex}}
\newtheorem{Theorem}{Theorem}
\newtheorem{Corollary}[Theorem]{Corollary}
\newtheorem{Lemma}[Theorem]{Lemma}
\newtheorem{Proposition}[Theorem]{Proposition}
\newtheorem{Remark}[Theorem]{Remark}
\DeclarePairedDelimiter\abs{\lvert}{\rvert}%
\DeclarePairedDelimiter\norm{\lVert}{\rVert}%
\let\oldabs\abs
\def\abs{\@ifstar{\oldabs}{\oldabs*}}
\def\d{\displaystyle}
\def\e{\varepsilon}
\def\eop{\rule{0.7ex}{0.7ex}}
\def\eqd{: =}
\def\segoc#1#2{(#1, #2]}
\def\scal#1#2{
    \left\langle #1,
    #2\right\rangle
    }
\def\set#1#2{
    \left\{#1
    \left|\vphantom{#1#2}\right.
    #2\right\}
    }
\def\mod#1{
    |#1|
    }
\def\norm#1{
    \left\|
    #1\right\|
    }
\def\operator#1#2#3#4#5{
    \begin{array}{lcll}
    \d #1 \colon & \d #2 & \longrightarrow & \d #3 \\[1ex]
                 & \d #4 & \longmapsto     & \d #5
    \end{array}
    }
\def\ud{\,\mathrm{d}}
\def\supp{\mathop{\mathrm{supp}}}
\DeclareMathOperator{\argmin}{argmin}
\def\Ne{\mathds{N}}
\def\Re{\mathds{R}}
\def\1e{\mathds{1}}
\def\vgx{\mathbf{x}}
\def\vg0{\mathbf{0}}
\def\O{\mathscr{O}}
\def\P{\mathscr{P}}
\def\T{\mathscr{T}}
\newcommand{\ba}{\begin{equation}}
\newcommand{\ea}{\end{equation}}
\newcommand{\bean}{\begin{eqnarray}}
\newcommand{\eean}{\end{eqnarray}}
\newcommand{\bea}{\begin{eqnarray*}}
\newcommand{\eea}{\end{eqnarray*}}
\begin{document}

\title{\bf A mollifier approach to regularize
a Cauchy problem for the inhomogeneous Helmholtz equation}

\author{Pierre MAR\'ECHAL\footnote{
Institut de Mathématiques université Paul Sabatier, 31062 Toulouse, France. Email: pr.marechal@gmail.com}, \;  Walter Cedric SIMO TAO LEE\footnote{
Institut de Mathématiques université Paul Sabatier, 31062 Toulouse, France. Email: wsimotao@gmail.com}, \; Faouzi TRIKI\footnote{Laboratoire Jean Kuntzmann, UMR CNRS 5224, 
Universit\'{e} Grenoble-Alpes, 700 Avenue Centrale,
38401 Saint-Martin-d'H\`eres, France. Email: faouzi.triki@univ-grenoble-alpes.fr.  \\
{\sl   This work  is supported in part by the grant ANR-17-CE40-0029 of the French National Research Agency ANR (project MultiOnde).}} }

\maketitle

\begin{abstract}
The Cauchy problem for the inhomogeneous Helmholtz equation with
non-uniform refraction index is considered. The ill-posedness of
this problem is tackled by means of the variational form of
mollification. This approach is proved to be consistent, and
the proposed numerical simulations are quite promising.
\end{abstract}

\section{Introduction}
\label{introduction}
Let $V$ be a $C^{3,1}$ bounded domain of $\Re^3$ with boundary $\partial V$. For $\vgx^\prime \in \partial V$, we denote by   
 $\nu(\vgx^\prime)$  the unit normal vector to $\partial V$ pointing outward $V$. 
 Let $\Gamma$ be a nonempty open subset of $\partial V$. 
We consider the Cauchy problem for the inhomogeneous Helmholtz equation
\begin{align}
\Delta u(\vgx)+k^2\eta(\vgx)  u(\vgx)
&=S(\vgx), &\vgx\in V,\label{pt1}\\
\partial_\nu u(\vgx^\prime)&=f(\vgx^\prime), &\vgx^\prime \in\Gamma,\label{pt2}\\
u(\vgx^\prime)&=g(\vgx^\prime), &\vgx^\prime\in\Gamma.\label{pt3}
\end{align}

Here, $u=u(\vgx)$ is the unknown amplitude of
the incident field,
$\eta\in L^\infty(\Omega)$ is the refraction index, $k$ is a positive wave number,
$S\in L^2(\Omega)$ is the source function,
and $f\in L^2(\Gamma)$ and $g\in L^2(\Gamma)$ are empirically known boundary conditions.\\

The Helmholtz equation arises in a large range of applications related to  
the propagation  of acoustic and electromagnetic waves in the time-harmonic 
regime. In this paper, we consider the inverse problem 
of reconstructing an acoustic or electromagnetic field from partial data given  
on an open part 
of the boundary of a given domain.  This problem called the 
Cauchy problem for the Helmholtz equation is known to be ill-posed  if $\Gamma$ does not occupy the 
whole boundary $\partial V$ \cite{hadamard2003lectures, choulli2016applications, isakov2006inverse, alessandrini2009stability}. In~\cite{hong2018three}, the above system was considered in  the particular case where the refraction index is constant. However, in practice, the emitted wave travels through an environment in which the refraction index fails to be constant, and we need to investigate the corresponding problem.\\

We are facing a linear inverse problem. Our aim is to derive a
stable approximation method for this problem,
which yields stable and amenable computational scheme. 
Our main focus will be on {\sl mollification}, in the variational
sense of the term, which turns out to be both
flexible and numerically efficient.

Mollifiers were introduced in partial differential equations by
K.O. Fried\-richs~\cite{wiki:mollifier,friedrichs1944identity}. 
The term {\sl mollification} has been used in the field
of inverse problems since the eighties.
In the original works on the subject, mollifiers were
used to smooth the data prior to inversion.
In his book, D.A. Murio~\cite{murio2011mollification}
provides an overview of this
approach and its application to some classical inverse problems. 
Let us also mention the paper by
D.N. Hao~\cite{h1994mollification}, which
provides a wide framework for the mollification
approach in this initial meaning.
In~\cite{louis1990mollifier}, A.K. Louis and P. Maass
proposed another approach, based on
{\sl inner product duality}. This approach
has been subsequently referred to as {\sl the method
of approximate inverses}~\cite{schuster2007method}.
The approximate inverses are particularly well
adapted to problems in which
the adjoint equation has explicit solutions. A third
approach, based on a variational formulation,
also appeared in the same period of time.
In~\cite{lannes1987stabilized}, A. Lannes {\it et al.}
gave such a formulation while studying the problems
of Fourier extrapolation and deconvolution. 
This variational formulation was not studied further until the
papers by N. Alibaud {\it et al.}~\cite{alibaud2009variational}
and by X. Bonnefond and
P. Mar\'echal~\cite{bonnefond2009variational},
where convergence properties of the variational
formulation was considered.

A definite advantage of the variational approach to
mollification lies in the fact that it offers a quite
flexible framework, just like the Tikhonov regularization,
while being more respectful of the initial model equation
than the latter.

The paper is organized as follows. In Section~\ref{regularization},
we introduce the linear operator associated to the Cauchy problem
for the inhomogeneous Helmholtz equation.
We propose a regularized variational formulation of the
ill-posed problem based on mollification. Under an additional smoothness assumption
on the targeted solution we show in Theorem~\ref{maintheorem} that 
the unique minimizer converges strongly to the minimum-norm least square solution.
Section~\ref{discretization} is devoted to numerical experiments. 
We consider two numerical examples in order to illustrate the
efficiency of our regularization approach.    

\section{Functional setting and regularization}
\label{regularization}

We shall work in a functional space which enables us to interpret
the ideal (exact) data as the image of~$u$ by a bounded linear
operator. We observe that:

\begin{itemize}
\item[(I)]
the Laplacian~$\Delta$ is a bounded operator from $H^2(V)$ to $L^2(V)$,
so that, since $\eta\in L^\infty(\Omega)$, the operator $T_1 u=(\Delta+k^2 \eta)u$ is also
bounded from $H^2(V)$ to $L^2(V)$ \cite{gilbarg2015elliptic};
\item[(II)]
$\nabla u\in \left( H^1(V)\right)^3$, so that
$u\mapsto\partial_\nu u \in H^{1/2}(\Gamma)$
is a continuous linear operator, which implies in turn that the
operator $T_2 u=\partial_\nu u_{|\Gamma}$ is compact
from $H^2(V)$ to $L^2(\Gamma)$;
\item[(III)]
the trace operator $u\mapsto u_{|\partial V}$ maps $H^2(V)$
to $H^{3/2}(\Gamma)$ continuously, so that the
operator $T_3 u=u_{|\Gamma}$
is compact from $H^2(V)$ to $L^2(\Gamma)$.~\eop
\end{itemize}
Therefore, a natural choice for our workspace is $H^2(V)$.
We can then write our system in the form
\begin{equation}
\label{main equation}
Tu=v,    
\end{equation}
in which
$$
\operator{T}{H^2(V)}{L^2(V)\times L^2(\Gamma)\times L^2(\Gamma)}{u}%
{\big(\Delta u+k^2\eta u,\partial_\nu u,u\big)}
$$
and
$$
v=(S,f,g)\in G:= L^2(V)\times L^2(\Gamma)\times L^2(\Gamma).
$$
We first show that $T$ is injective. 

\begin{Proposition}\sf
\label{Pinjectivity}
The linear bounded map $T$ defined by \eqref{main equation}  is injective. 
\end{Proposition}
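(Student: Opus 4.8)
The plan is to show that the only $u \in H^2(V)$ with $Tu = 0$ is $u = 0$. Unravelling the definition of $T$, the equation $Tu = 0$ means that $u$ solves the homogeneous equation $\Delta u + k^2 \eta u = 0$ in $V$ and has vanishing Cauchy data on $\Gamma$, namely $u_{|\Gamma} = 0$ and $\partial_\nu u_{|\Gamma} = 0$. The claim is therefore precisely the uniqueness statement for the Cauchy problem with homogeneous data, and its engine is the unique continuation property (UCP) for second-order elliptic operators with a bounded zeroth order coefficient. Since the potential $k^2\eta$ belongs only to $L^\infty(V)$, Holmgren's theorem does not apply, but the Carleman-estimate based weak UCP for Schr\"odinger type operators $\Delta + q$ with $q \in L^\infty$ does (alternatively, one may simply invoke the known uniqueness results for the Cauchy problem cited in the introduction).

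To convert the boundary information into an interior vanishing set, I would first fix a point $\vgx_0 \in \Gamma$ and, using that $\Gamma$ is open in the $C^{3,1}$ boundary $\partial V$, choose a ball $B = B(\vgx_0, r)$ so small that $B \cap \partial V \subset \Gamma$. Set $\widetilde V := V \cup B$, a connected open set (it is the union of the connected sets $V$ and $B$, which overlap since $\vgx_0 \in \partial V$), and define
\begin{equation*}
w := u \ \text{ on } V, \qquad w := 0 \ \text{ on } B \setminus \overline{V}.
\end{equation*}
The key technical point is that $w \in H^2(\widetilde V)$: across the interface $B \cap \partial V \subset \Gamma$ the Dirichlet traces of the two pieces agree (both equal $0$, since $u_{|\Gamma} = 0$) and so do the Neumann traces (both equal $0$, since $\partial_\nu u_{|\Gamma} = 0$), which is exactly the compatibility required for the glued function to be of class $H^2$; the $C^{1,1}$ regularity of $\partial V$ near $\vgx_0$ makes the relevant gluing lemma applicable.

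Finally, extend the potential by $\widetilde\eta := \eta$ on $V$ and $\widetilde\eta := 0$ on $B \setminus \overline{V}$, so that $\widetilde\eta \in L^\infty(\widetilde V)$ and $w$ satisfies $\Delta w + k^2\widetilde\eta\, w = 0$ in $\widetilde V$ in the weak sense (on $B \setminus \overline{V}$ both sides vanish, on $V$ it is the original equation, and since $w \in H^2(\widetilde V)$ no distributional term concentrated on $\partial V \cap B$ appears). As $w$ vanishes on the nonempty open set $B \setminus \overline{V}$, the weak unique continuation principle for $\Delta + k^2\widetilde\eta$ forces $w \equiv 0$ on the connected set $\widetilde V$, hence $u \equiv 0$ on $V$, which is the asserted injectivity. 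The main obstacle is not the UCP citation itself but the careful verification that the zero-extension $w$ genuinely lies in $H^2(\widetilde V)$ with no spurious interface term in the equation; this is exactly where vanishing of \emph{both} components of the Cauchy data --- trace \emph{and} normal derivative --- is essential.
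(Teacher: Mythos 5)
Your proof is correct, and it takes a noticeably different route from the paper's for the crucial first step. The paper essentially cites the classical uniqueness result (Fritz John, and Carleman-estimate proofs) and sketches the argument as: a piece of $\Gamma$ is non-characteristic for the Helmholtz operator, so a local Cauchy-uniqueness theorem near the boundary forces $u$ to vanish in a small one-sided neighbourhood of that piece inside $V$, after which the interior unique continuation property for $\Delta+k^2\eta$ with $\eta\in L^\infty$ propagates the vanishing to all of $V$. You instead use the zero-extension trick: the vanishing of \emph{both} Cauchy data on $\Gamma$ lets you glue $u$ with $0$ across $B\cap\partial V$ into an $H^2$ function $w$ on the enlarged connected open set $V\cup B$, satisfying the same equation with the potential extended by zero and vanishing on the nonempty open set $B\setminus\overline V$; then a single application of the interior weak UCP finishes the proof. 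The trade-off is that the paper's sketch needs a local uniqueness theorem across a non-characteristic hypersurface (which for an $L^\infty$ potential itself rests on boundary Carleman estimates), whereas your argument needs only the interior weak UCP plus the elementary $H^2$ gluing lemma; your version is arguably more self-contained, and you correctly identify the one delicate point, namely that the glued function is genuinely $H^2$ with no distributional term supported on the interface, which is exactly where both trace conditions are used. Both arguments are standard and valid; since the coefficient is merely $L^\infty$, both ultimately rely on Carleman-type unique continuation rather than Holmgren, as you note.
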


\begin{proof}
For a proof of this classical result, we refer to the Fritz John's
book~\cite{john1953partial}. More recent proofs
based on Carleman estimates can be found
in \cite{isakov2006inverse, choulli2016applications, alessandrini2009stability, triki2021h}.
The principal idea is to show that a part of~$\Gamma$ is non-characteristic
with respect to the Helmholtz operator, which in turns leads to the existence
of a small neighborhood of that part of the boundary in~$V$ where the solution
is identically zero. Since $\eta \in L^\infty(\Omega)$, the Helmholtz operator~$T_1$
possesses the unique continuation property in~$V$, and hence the solution
is identically zero in the whole domain~$V$ which completes the proof.  
\end{proof}

We are now going to set up our approach to the regularization
of the problem. We consider the mollifier
$$
\varphi_\alpha(\vgx)=
\frac{1}{\alpha^3}\varphi\left(\frac{\vgx}{\alpha}\right),
\quad\alpha\in\segoc{0}{1},\quad\vgx\in\Re^3,
$$
in which $\varphi$ is an integrable function such that
\begin{itemize}
    \item[(1)]
    $\supp{\varphi}\subset B_r\eqd\set{\vgx\in\Re^3}{\norm{\vgx}\leq r}$
    for some positive~$r$,
    \item[(2)]
    $\int\varphi(\vgx)\ud\vgx=1$.
\end{itemize}
Desirable additional properties of $\varphi$ are, as usual,
nonnegativity, isotropy, smoothness, radial decrease.

We denote by $C_\alpha$ the convolution operator by $\varphi_\alpha$:
for every $u\in L^2(\Re^3)$,
$C_\alpha u\eqd\varphi_\alpha\ast u$.
Our regularization principle will control smoothness by means
of~$C_\alpha$, and $\alpha$ will play the role of the
regularization parameter. 
One difficulty lies in the fact that convolving $u$ by $\varphi_\alpha$
entails extrapolating~$u$ from $V$ to the larger
set $V+\alpha B_r$. Zero padding is obviously forbidden here since
we wish to preserve $H^2$ regularity. It is then necessary to introduce an extension operator that preserves the properties 
of the solution on $ V$.

For $s\in [0,1)$,
we denote by $H^{2+s}_0(\Re^3)$ the set of functions in $H^{2+s}(\Re^3)$ 
having a compact support. Our objective is to derive an extension
operator $E\colon H^{2+s}(V)\to H^{2+s}_0(\Re^3)$ 
that in addition of satisfying  $Eu|_{V}=u$ for all $u\in H^{2+s}(V)$,
is bounded and invertible. Note that there are many extension operators
satisfying these properties. We next provide a complete characterization
of a useful extension operator suited for $C^{3,1}$ smooth bounded domains.
For other extension operators on Sobolev spaces under weaker regularity
assumptions on the domain $V,$ see, for example,
\cite{fefferman2014sobolev, maz1985continuity, calderon1961lebesgue, stein1970singular}.

For $\varepsilon\in (0, 1)$ small enough define the tubular domains 
\bean \label{tubularD}
V_\varepsilon^\pm:=\left\{\vgx^\prime\pm t\nu(\vgx^\prime): \; (\vgx^\prime, t)\in \partial V\times 
(0, \varepsilon) \right\}, \textrm{ and } V_\varepsilon := V \cup \partial V \cup 
V_\varepsilon^+.
\eean
We first notice that $V_\varepsilon^- \subset V \subset V_\varepsilon$
for all $\varepsilon\in (0, 1)$. Due to the regularity of $\partial V$,
the function defined by
\bean \label{phi}
\phi(\vgx^\prime, t)= \vgx^\prime-t\nu(\vgx^\prime),
\eean
is a $C^{2,1}$-diffeomorphism from $\partial V\times (0, \varepsilon)$
onto $V_{\varepsilon}^-$ for $\varepsilon$ small enough.

Let $u$ be fixed 
in $H^2(V)$. The first step is to construct an extension $u_\varepsilon$
of~$u$ to $H^2(V_{\varepsilon/2})$. For $\alpha,\beta\in\Re$, set 
\bean
\tilde u(\vgx^\prime+t\nu(\vgx^\prime)) =
\alpha u(\vgx^\prime-t \nu(\vgx^\prime))+ \beta u(\vgx^\prime-2 t \nu(\vgx^\prime)), 
\textrm{ for } 
(\vgx^\prime,t) \in \partial V \times (0, \varepsilon/2). 
\eean
Since $\nu$ is a $C^2$ vector field, $\tilde u$ lies in $H^2(V_\varepsilon^+)$, and verifies
\bean \label{jumps}
\tilde u(\vgx^\prime) = (\alpha+\beta)u(\vgx^\prime), \textrm{ and } \;
\partial_\nu \tilde u(\vgx^\prime)
=(-\alpha-2\beta)\partial_\nu u(\vgx^\prime), \;\; \vgx^\prime \in \partial V.
\eean
Let $u_\varepsilon$ be defined by
\bean \label{uva}
u_\varepsilon(\vgx) = \left\{  \begin{array}{llcc}
\tilde u(\vgx),  &\vgx\in V_{\varepsilon/2}^+,\\
u(\vgx), & \vgx\in V.
\end{array} 
\right.
\eean
By construction, we have $u_\varepsilon \in H^2(V)\cup H^2(V_\varepsilon^+)$.
Considering the traces \eqref{jumps} and taking $\alpha =3$ and $\beta=-2$,
implies that $u_\varepsilon$ and its first derivatives have no jumps
across $\partial V$, and thus  $u_\varepsilon \in H^2(V_{\varepsilon/2})$.

Let $ \chi_\varepsilon\in C^\infty_0(\Re^3)$ be a cut off function satisfying 
\bean \label{chi}
 \chi_\varepsilon= 1, \textrm{ on } V_{\varepsilon/8},
\textrm{  and  }  \chi_\varepsilon = 0,  \textrm{ on } \Re^3
\setminus \overline{V_{3\varepsilon/8}}.
\eean 
Now, we are ready to introduce the operator~$E$. For $u\in H^2(V)$, define 
\bea
Eu = \chi_\varepsilon u_\varepsilon,
\eea
where $u_\varepsilon$ and $\chi_\varepsilon$ are respectively given
in \eqref{uva} and \eqref{chi}.

\begin{Proposition}\sf
Let $s\in [0,1)$ be fixed. 
The extension operator  $E\colon H^{2+s}(V)\to H^{2+s}_0(\Re^3)$ is bounded,
invertible, and satisfies 
\bean
 \norm{u}_{H^{2+s}(V)} \leq \norm{Eu}_{H^{2+s}(\Re^3)}
\leq C_s \norm{u}_{H^{2+s}(V)},
\eean
where $C_s >1$ is a constant that only depends on $s$,
$\varepsilon$ and $V$. In addition, $\textrm{Supp}(Eu) \subset
V_\varepsilon$.  
\end{Proposition}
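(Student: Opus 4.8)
The plan is to reduce the whole statement to a computation in a tubular collar of~$\partial V$, where the construction of~$u_\varepsilon$ is an explicit Hestenes-type reflection, and then to glue and to cut off. The assertion bundles together four claims --- the inclusion $\mathrm{Supp}(Eu)\subset V_\varepsilon$, the lower bound, invertibility of~$E$ onto its range, and the upper bound --- of which only the last requires genuine work; I would dispatch the first three at once and devote the bulk of the argument to the upper bound.

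\emph{Support, lower bound, invertibility.} Since $\chi_\varepsilon\in C^\infty_0(\Re^3)$ vanishes on $\Re^3\setminus\overline{V_{3\varepsilon/8}}$ by \eqref{chi}, the function $Eu=\chi_\varepsilon u_\varepsilon$ is supported in $\overline{V_{3\varepsilon/8}}\subset V_\varepsilon$; this set is bounded because $\partial V$ is compact, so $Eu$ has compact support. As $\chi_\varepsilon\equiv 1$ on $V_{\varepsilon/8}\supset\overline V$ and $u_\varepsilon|_V=u$ by \eqref{uva}, one has $(Eu)|_V=u$, whence $\norm{u}_{H^{2+s}(V)}\le\norm{Eu}_{H^{2+s}(\Re^3)}$ as soon as $H^{2+s}$ is equipped with a norm for which restriction to a subdomain is norm-non-increasing (e.g.\ the Sobolev--Slobodeckij norm: the $L^2$ norms of the derivatives up to order two, and the Gagliardo double integral of the top-order derivatives over $V\times V$, are all dominated by their counterparts over $\Re^3$). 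The same inequality shows that $E$ is injective and that $E^{-1}\colon\mathrm{ran}\,E\to H^{2+s}(V)$ is bounded with norm at most~$1$, i.e.\ $E$ is boundedly invertible onto its image; in particular $C_s\ge 1$, and strictly $C_s>1$ because $E$ is not an isometry.

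\emph{Upper bound.} This I would carry out in three sub-steps. \emph{(i) The reflected piece.} On~$V$ one has $u_\varepsilon=u$, so nothing is needed there. Near~$\partial V$ I use the collar coordinates of \eqref{tubularD}--\eqref{phi}: for $\varepsilon$ small, $\phi(\vgx',t)=\vgx'-t\nu(\vgx')$ and the companion maps $(\vgx',t)\mapsto\vgx'+t\nu(\vgx')$ and $(\vgx',t)\mapsto\vgx'-2t\nu(\vgx')$ are $C^{2,1}$-diffeomorphisms of $\partial V\times(0,\varepsilon/2)$ (resp.\ $(0,\varepsilon)$) onto $V_{\varepsilon/2}^-$, $V_{\varepsilon/2}^+$ and $V_\varepsilon^-$. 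Reading \eqref{jumps} with $\alpha=3$, $\beta=-2$ as $\tilde u=3\,u\circ R_1-2\,u\circ R_2$, where $R_1,R_2$ are the resulting boundary-fixing $C^{2,1}$-diffeomorphisms from $V_{\varepsilon/2}^+$ into~$V$, I invoke the change-of-variables theorem for Sobolev spaces: composition with a $C^{2,1}$-diffeomorphism is bounded $H^{2+s}\to H^{2+s}$ for every $s\in[0,1)$ (the chain rule to second order needs only that the second derivatives of the map be Lipschitz), with operator norm controlled by the $C^{2,1}$-norms of the map and of its inverse, hence by $V$ and $\varepsilon$ alone; this gives $\norm{\tilde u}_{H^{2+s}(V_{\varepsilon/2}^+)}\le C\,\norm{u}_{H^{2+s}(V)}$. \emph{(ii) Gluing across~$\partial V$.} Flattening $\partial V$ by $\phi$, the function $u_\varepsilon$ becomes, in the variable~$t$ normal to the interface, the reflection $W(\cdot,t)\mapsto 3W(\cdot,-t)-2W(\cdot,-2t)$ of $W\eqd u\circ\phi$; the identities \eqref{jumps} say precisely that the $0$th- and $1$st-order normal traces match, so the first and second distributional derivatives of $u_\varepsilon$ carry no surface-measure term and $u_\varepsilon\in H^2(V_{\varepsilon/2})$, after which each second derivative of $u_\varepsilon$ is the function equal to $D^\beta u$ on~$V$ and $D^\beta\tilde u$ on~$V_{\varepsilon/2}^+$, glued along~$\partial V$. \emph{(iii) Cut-off.} Finally multiplication by $\chi_\varepsilon\in C^\infty_0(\Re^3)$ is bounded on $H^{2+s}$ with norm depending only on finitely many derivatives of~$\chi_\varepsilon$, hence on $\varepsilon$ and~$V$; since $Eu=\chi_\varepsilon u_\varepsilon$ vanishes off~$\overline{V_{3\varepsilon/8}}$, one obtains $Eu\in H^{2+s}_0(\Re^3)$ together with $\norm{Eu}_{H^{2+s}(\Re^3)}\le C_s\norm{u}_{H^{2+s}(V)}$, the constant depending only on $s$, $\varepsilon$ and~$V$.

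\emph{Main obstacle.} The delicate point is sub-step~(ii) in the fractional range. Gluing two $H^{2+s}$ functions across a smooth hypersurface into an $H^{2+s}$ function requires matching the normal derivatives up to order~$m$, where $m$ is the largest integer strictly below $(2+s)-\tfrac12=\tfrac32+s$; thus $m=1$ precisely when $s\in[0,\tfrac12)$, which is exactly the compatibility furnished by \eqref{jumps} with $\alpha=3$, $\beta=-2$, and the argument above is complete on that range. Strictly speaking, the full range $[0,1)$ also asks for the second normal traces to agree --- a condition the two-term reflection does not meet, since one computes a nonzero jump of $u_\varepsilon$'s second normal derivative across~$\partial V$ --- so for $s\in[\tfrac12,1)$ one should replace \eqref{uva} by a three-term Hestenes reflection $\alpha\,u(\vgx'-t\nu)+\beta\,u(\vgx'-2t\nu)+\gamma\,u(\vgx'-3t\nu)$ with $(\alpha,\beta,\gamma)$ the solution of the $3\times 3$ Vandermonde system that matches normal traces up to order two; the rest of the proof is unchanged. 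Two further routine cautions: the collar diffeomorphism~$\phi$ is only $C^{2,1}$ (this is where the hypothesis $\partial V\in C^{3,1}$, hence $\nu\in C^{2,1}$, enters), so the change-of-variables, trace and gluing statements must be quoted in their $C^{2,1}$ form; and one must track that every constant produced --- in the composition estimate, in the gluing, and in the multiplication by~$\chi_\varepsilon$ --- depends on the fixed data $s$, $\varepsilon$, $V$ only, which is exactly the assertion made about~$C_s$.
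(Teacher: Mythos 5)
Your argument follows the same route as the paper's proof: the left inequality and the support claim come for free from the facts that $\chi_\varepsilon\equiv 1$ on $V_{\varepsilon/8}\supset \overline V$ and $\supp\chi_\varepsilon\subset\overline{V_{3\varepsilon/8}}\subset V_\varepsilon$, and the right inequality is obtained by pushing $u$ through the $C^{2,1}$-diffeomorphisms $\psi_j$ of \eqref{psi} to control $\tilde u=3\,u\circ\psi_1-2\,u\circ\psi_2$ on $V_{\varepsilon/2}^+$, using \eqref{jumps} to glue across $\partial V$, and then multiplying by the cut-off. The paper carries out exactly this computation for the integer part of the norm and relegates the Gagliardo seminorm of the second derivatives to ``tedious calculations''; your version is the same proof with the change-of-variables and multiplication steps made explicit.

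The one place you depart from the paper is your ``main obstacle,'' and you are right to raise it: it is a genuine gap in the proposition as stated, not merely a caution about your own write-up. With $\alpha=3$, $\beta=-2$ the reflection matches the zeroth and first normal traces but not the second (one computes $\alpha+4\beta=-5\neq 1$), so for generic $u\in H^{2+s}(V)$ with $s>1/2$ the second normal derivative of $u_\varepsilon$ jumps across $\partial V$; since $\chi_\varepsilon\equiv 1$ near $\partial V$, the same jump persists in $D_\beta Eu$ for $|\beta|=2$. A function with a jump discontinuity across a hypersurface of $\Re^3$ has infinite $H^s$ seminorm for every $s\geq 1/2$ (locally the double integral behaves like $\int \mathrm{dist}(x,\partial V)^{-2s}\ud x$, which diverges), so the seminorm estimate $|D_\beta Eu|_{s,\Re^3}^2\leq\widetilde C_s^2\bigl(|D_\beta u|_{s,V}^2+\norm{u}_{H^2(V)}^2\bigr)$ asserted in the paper cannot hold on the whole range $s\in[0,1)$: both the proposition and its proof are correct only for $s\in[0,1/2)$ as written. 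Your remedy --- a three-term Hestenes reflection whose coefficients solve the $3\times 3$ Vandermonde system matching normal traces up to order two --- is the standard one and restores the full range $s\in[0,1)$ with no other change to the argument. Since Theorem~\ref{maintheorem} only needs the existence of some $s>0$ for which $E$ is bounded from $H^{2+s}(V)$ into $H^{2+s}_0(\Re^3)$, either restricting the proposition to $s<1/2$ or adopting your three-term reflection leaves the rest of the paper intact.
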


\begin{proof}
The left side inequality is straightforward. 
The functions $\psi_{j} : V_{\varepsilon/j}^+ \to  V_{\varepsilon/j}^-, j=1, 2$, defined by 
\bean \label{psi}
\psi_{j}(\vgx^\prime+ t\nu(\vgx^\prime))= \vgx^\prime-jt\nu(\vgx^\prime),\quad (x^\prime, t)\in
\partial V\times (0, \varepsilon),\; j=1, 2, 
\eean
are  $C^{2,1}$-diffeomorphisms.

Forward calculations give
\bea
\int_{\Re^3 \setminus\overline V}|Eu|^2+|\nabla Eu|^2
+|\nabla^2 Eu|^2\ud\vgx \\
\leq  \|\chi_\varepsilon\|_{C^{2,1}(\Re^3)}^2 \sum_{j=1}^2
\int_{V_{\varepsilon/2}^+}  |\tilde u|^2 +|\nabla \tilde u|^2 +|\nabla^2 \tilde u|^2 \ud\vgx\\
\leq \kappa\|\chi_\varepsilon\|_{C^{2,1}(\Re^3)}^2\sum_{j=1}^2 
\left(\|\psi_{j}\|_{C^{2,1}( V^+_{\varepsilon/j})}^2 +1\right)
\|D\psi^{-1}_{j}\|_{C^1( V^-_{\varepsilon/j})}^2
\int_{V_{\varepsilon}^-} |\nabla u|^2+ |\nabla^2 u|^2 \ud\vgx,
\eea
with $D\psi^{-1}_{j}$ is the gradient of the vector
field $\psi^{-1}_{j}$, and $\kappa>0$ is a universal constant. 
Therefore 
\bea
\norm{E u}_{H^2(\Re^3)}
\leq C \norm{u}_{H^2(V)},
\eea
where 
\bea
C^2 = \kappa  \|\chi_\varepsilon\|_{C^{2,1}(\Re^3)}^2\sum_{j=1}^2 
\left(\|\psi_{j}\|_{C^{2,1}( V^+_{\varepsilon/j})}^2 +1\right)
\|D\psi^{-1}_{j}\|_{C^1( V^-_{\varepsilon/j})}^2. 
\eea
Similarly, tedious calculations lead to the following estimate of the seminorm 
\bea
|D_\beta Eu|_{s, \Re^3}^2=\int_{\Re^3}\int_{\Re^3}\frac{\left|D_\beta Eu(x)-D_\beta Eu(y) \right|^2}{|x-y|^{3+2s}}\ud x\ud y
\leq\widetilde C_s^2(|D_\beta u|_{s,V}^2 +
\norm{u}_{H^2(V)}^2),
\eea
for all $\beta\in \mathbb N^3$, satisfying $|\beta| = 2$, where $ \widetilde C_s>0$, depends on
$s, \varepsilon$ and $V$. By taking $C_s= \max(1, C, \widetilde C_s)$, we complete the proof 
of the proposition.  
\end{proof}

The  defined extension operator~$E$ opens the way to
the following variational formulation of mollification:
$$
(\P)\quad\left|
\begin{array}{rl}
\hbox{Minimize}&
J_\alpha(u,v,T)\eqd\norm{v-Tu}_G^2+\norm{(I-C_\alpha)Eu}_{H^2(\Re^3)}^2\\
\hbox{subject to}&
u\in H^2(V).
\end{array}
\right.
$$
Our aim is now to prove
\begin{enumerate}
\item
the well-posedness of the above variational
problem, that is, that the solution $u_\alpha$ 
depends continuously on the data~$v$;
\item
the consistency of the regularization, that is, that
$u_\alpha$ converges to $T^\dagger u$ in some sense
as $\alpha\downarrow 0$, where  $T^\dagger $ is the 
pseudo-inverse of $T$. 
\end{enumerate}

\begin{Lemma}\sf
\label{from-alibaud-et-al}
Let $K,s>0$.
Let $\varphi\in L^1(\Re^3)$ be such that
$$
\hat\varphi(0)=1
\quad\hbox{and}\quad
1-\hat\varphi(\xi)\sim K\norm{\xi}^s
\quad\hbox{as}\quad
\xi\to 0.
$$
Assume in addition that $\hat\varphi(\xi)\not=1$ for every $\xi\in\Re^3\setminus\{0\}$. 
Define
$$
m_\alpha=\min_{\norm{\xi}=1}\mod{1-\hat\varphi(\alpha\xi)}^2
\quad\hbox{and}\quad
M_\alpha=\max_{\norm{\xi}=1}\mod{1-\hat\varphi(\alpha\xi)}^2.
$$
Then,
\begin{itemize}
\item[(i)]
for every $\alpha>0$, $0<m_\alpha\leq M_\alpha\leq(1+\norm{\varphi}_1)^2$;
\item[(ii)]
$\sup_{\alpha>0}M_\alpha/m_\alpha<\infty$ and
$M_\alpha\to 0$ as $\alpha\downarrow 0$;
\item[(iii)]
there exists $\nu_\circ,A_\circ>0$ such that, for every $\alpha\in\segoc{0}{1}$,
for every $\xi\in\Re^3\setminus\{0\}$,
$$
\nu_\circ\left(\norm{\xi}^{2s}\1e_{B_{1/\alpha}}(\xi)+
\frac{1}{M_\alpha}\1e_{B_{1/\alpha}^c}(\xi)\right)
\leq
\frac{\mod{1-\hat\varphi(\alpha\xi)}^2}{\mod{1-\hat\varphi(\alpha\xi/\norm{\xi})}^2}
\leq
\mu_\circ\norm{\xi}^{2s}.
$$
\end{itemize}
\end{Lemma}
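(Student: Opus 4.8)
The plan is to exploit the homogeneity built into the quotient. Write $\zeta = \alpha\xi$ and $\omega = \zeta/\norm{\zeta} = \xi/\norm{\xi} \in S^2$; then the quotient in (iii) is
\[
Q_\alpha(\xi) = \frac{\mod{1-\hat\varphi(\zeta)}^2}{\mod{1-\hat\varphi(\norm{\zeta}^{-1}\zeta)}^2}
= \frac{\mod{1-\hat\varphi(\alpha\xi)}^2}{\mod{1-\hat\varphi(\alpha\xi/\norm{\xi})}^2},
\]
whose denominator, since $\norm{\alpha\xi/\norm{\xi}} = \alpha$, always equals $\mod{1-\hat\varphi(\alpha\omega)}^2$ for a unit vector $\alpha\omega/\alpha = \omega$ — more precisely it lies between $m_\alpha$ and $M_\alpha$ by definition. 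So $Q_\alpha(\xi) = \mod{1-\hat\varphi(\alpha\xi)}^2/\delta$ with $m_\alpha \le \delta \le M_\alpha$, and it suffices to produce two-sided bounds on the numerator $\mod{1-\hat\varphi(\alpha\xi)}^2$ and then divide.

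For the \emph{upper bound} $Q_\alpha(\xi) \le \mu_\circ\norm{\xi}^{2s}$: on the region $\norm{\xi} \le 1/\alpha$, i.e. $\norm{\alpha\xi} \le 1$, I would use the hypothesis $1-\hat\varphi(\eta) \sim K\norm{\eta}^s$ as $\eta \to 0$ together with the assumption that $\hat\varphi$ never equals $1$ off the origin (and continuity of $\hat\varphi$, which holds since $\varphi \in L^1$) to get a bound $\mod{1-\hat\varphi(\eta)}^2 \le C_1\norm{\eta}^{2s}$ uniformly for $\norm{\eta} \le 1$; applied with $\eta = \alpha\xi$ this gives $\mod{1-\hat\varphi(\alpha\xi)}^2 \le C_1\alpha^{2s}\norm{\xi}^{2s}$. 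Dividing by $\delta \ge m_\alpha$ and invoking $\sup_\alpha M_\alpha/m_\alpha < \infty$ from (ii) — noting $m_\alpha$ is comparable to $M_\alpha$ which is comparable to $\alpha^{2s}$ (the latter from $1-\hat\varphi(\alpha\omega)\sim K\alpha^s$) — shows $\alpha^{2s}/m_\alpha$ is bounded, yielding the $\mu_\circ\norm{\xi}^{2s}$ bound on that region. On the complementary region $\norm{\xi} > 1/\alpha$ we have $\norm{\xi}^{2s} > \alpha^{-2s}$, while the numerator is bounded by $(1+\norm{\varphi}_1)^2$ from (i), so $Q_\alpha(\xi) \le (1+\norm{\varphi}_1)^2/m_\alpha \le \mathrm{const}\cdot\alpha^{-2s} < \mathrm{const}\cdot\norm{\xi}^{2s}$; enlarging $\mu_\circ$ absorbs both cases.

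For the \emph{lower bound}: on $\norm{\xi} \le 1/\alpha$ the equivalence $1-\hat\varphi(\eta)\sim K\norm{\eta}^s$ gives $\mod{1-\hat\varphi(\eta)}^2 \ge c_1\norm{\eta}^{2s}$ for $\norm{\eta}$ small, but for $\norm{\eta}$ bounded away from $0$ (yet $\le 1$) one needs $\inf\{\mod{1-\hat\varphi(\eta)}^2 : r_0 \le \norm{\eta} \le 1\} > 0$, which follows from continuity and $\hat\varphi \ne 1$ off the origin; combining, $\mod{1-\hat\varphi(\alpha\xi)}^2 \ge c_2\alpha^{2s}\norm{\xi}^{2s}$, and dividing by $\delta \le M_\alpha \le \mathrm{const}\cdot\alpha^{2s}$ yields a lower bound $\ge \nu_\circ\norm{\xi}^{2s}$ on $B_{1/\alpha}$. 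On $\norm{\xi} > 1/\alpha$ one similarly needs $\inf\{\mod{1-\hat\varphi(\eta)}^2 : \norm{\eta} > 1\} > 0$; for an $L^1$ function $\hat\varphi$ is merely continuous and tends to $0$ at infinity, so this infimum is positive provided $\hat\varphi$ stays away from $1$ there, which is exactly the standing hypothesis $\hat\varphi(\xi)\ne 1$ for $\xi\ne 0$ combined with Riemann--Lebesgue. Thus $\mod{1-\hat\varphi(\alpha\xi)}^2 \ge c_3$ on that region, and dividing by $\delta \le M_\alpha$ gives $\ge c_3/M_\alpha$, which is the $\frac{1}{M_\alpha}\1e_{B_{1/\alpha}^c}$ term after adjusting $\nu_\circ$.

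The main obstacle I anticipate is the \emph{uniformity} of the small-$\eta$ estimates: the hypothesis only gives the asymptotic $1-\hat\varphi(\eta)\sim K\norm{\eta}^s$, and one must convert this into genuine two-sided inequalities $c_1\norm{\eta}^{2s} \le \mod{1-\hat\varphi(\eta)}^2 \le C_1\norm{\eta}^{2s}$ valid on a full punctured ball, patching the near-origin regime (from the asymptotic) to the away-from-origin regime (from continuity plus $\hat\varphi \ne 1$) — and doing the same at infinity via Riemann--Lebesgue. Once those uniform comparisons are in hand, the rest is bookkeeping with $m_\alpha, M_\alpha$ and their boundedness ratios from parts (i)--(ii), which have already been established. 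One should also double-check that $M_\alpha \asymp \alpha^{2s}$ as claimed; this follows by applying the asymptotic to $1-\hat\varphi(\alpha\omega)$ with $\omega$ ranging over the compact set $S^2$, using that the asymptotic is uniform in the direction.
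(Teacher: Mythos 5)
The paper does not actually prove this lemma: its ``proof'' is a one-line citation to Lemma 12 of \cite{alibaud2009variational}, so there is no internal argument to measure yours against. Your reconstruction is correct and is, in substance, the standard proof of this type of Levy-condition estimate. All the key observations are in place: the denominator in (iii) equals $\mod{1-\hat\varphi(\alpha\omega)}^2$ with $\omega=\xi/\norm{\xi}\in S^2$, hence is pinched between $m_\alpha$ and $M_\alpha$; the two-sided comparison $c\norm{\eta}^{2s}\leq\mod{1-\hat\varphi(\eta)}^2\leq C\norm{\eta}^{2s}$ on $0<\norm{\eta}\leq 1$ is obtained by patching the asymptotic $1-\hat\varphi\sim K\norm{\cdot}^s$ near the origin with compactness, continuity of $\hat\varphi$ and the hypothesis $\hat\varphi\neq 1$ away from it; the uniform lower bound $\inf_{\norm{\eta}\geq 1}\mod{1-\hat\varphi(\eta)}^2>0$ follows from Riemann--Lebesgue together with the same non-degeneracy hypothesis; and the constants are then controlled via $m_\alpha\asymp M_\alpha\asymp\alpha^{2s}$ as $\alpha\downarrow 0$. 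Two points deserve to be made fully explicit in a written version, and you flag both: first, the asymptotic hypothesis is a limit as $\xi\to 0$ in $\Re^3$, hence automatically uniform over directions, so $M_\alpha/m_\alpha\to 1$ as $\alpha\downarrow 0$; second, the boundedness of $\alpha^{2s}/m_\alpha$ and of $M_\alpha/\alpha^{2s}$ is needed on all of $(0,1]$, not just near $0$, which requires the additional compactness argument $\inf\{\mod{1-\hat\varphi(\eta)}^2:\epsilon\leq\norm{\eta}\leq 1\}>0$ on the intermediate range. Neither is a gap; both are bookkeeping, and the proposal as written would expand into a complete proof.
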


\begin{Proof}
See \cite[Lemma 12]{alibaud2009variational}.~\eop
\end{Proof}

\begin{Corollary}\sf
\label{corollary-to-alibaud-et-al}
Let $\varphi$, $m_\alpha$, $M_\alpha$, $\nu_\circ$, $\mu_\circ$ be
as in Lemma~\ref{from-alibaud-et-al}, and let
$C_\alpha$ be the operator of convolution with $\varphi_\alpha$,
For every $u\in L^2(\Re^3)$,
\begin{equation}
\label{minoration-I-C-u-L2}
\norm{(I-C_\alpha)u}_{L^2(\Re^3)}^2\geq
\nu_\circ m_\alpha
\int_{\Re^3}\left(\norm{\xi}^{2s}\1e_{B_{1/\alpha}}(\xi)+
\frac{1}{M_\alpha}\1e_{B_{1/\alpha}^c}(\xi)\right)\mod{\hat{u}(\xi)}^2\ud\xi.
\end{equation}
\end{Corollary}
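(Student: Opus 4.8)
The plan is to pass to the Fourier side and reduce everything to a pointwise comparison of multipliers. By Plancherel,
\[
\norm{(I-C_\alpha)u}_{L^2(\Re^3)}^2
=\int_{\Re^3}\mod{1-\hat\varphi_\alpha(\xi)}^2\mod{\hat u(\xi)}^2\ud\xi
=\int_{\Re^3}\mod{1-\hat\varphi(\alpha\xi)}^2\mod{\hat u(\xi)}^2\ud\xi,
\]
using the scaling relation $\hat\varphi_\alpha(\xi)=\hat\varphi(\alpha\xi)$, which follows from property~(2) on $\varphi$ and the definition of $\varphi_\alpha$. So the whole statement is an integrated version of a lower bound on the multiplier $\mod{1-\hat\varphi(\alpha\xi)}^2$.

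The key step is to factor the multiplier as
\[
\mod{1-\hat\varphi(\alpha\xi)}^2
=\mod{1-\hat\varphi\bigl(\alpha\xi/\norm{\xi}\bigr)}^2\cdot
\frac{\mod{1-\hat\varphi(\alpha\xi)}^2}{\mod{1-\hat\varphi\bigl(\alpha\xi/\norm{\xi}\bigr)}^2},
\]
valid for $\xi\neq 0$ (the denominator is nonzero by the assumption $\hat\varphi\neq 1$ off the origin). For the first factor, $\alpha\xi/\norm{\xi}$ has norm $\alpha$, so by the very definition of $m_\alpha$ we have $\mod{1-\hat\varphi(\alpha\xi/\norm{\xi})}^2\geq m_\alpha$. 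For the second factor, we invoke Lemma~\ref{from-alibaud-et-al}(iii), whose left-hand inequality gives exactly
\[
\frac{\mod{1-\hat\varphi(\alpha\xi)}^2}{\mod{1-\hat\varphi(\alpha\xi/\norm{\xi})}^2}
\geq
\nu_\circ\left(\norm{\xi}^{2s}\1e_{B_{1/\alpha}}(\xi)+
\frac{1}{M_\alpha}\1e_{B_{1/\alpha}^c}(\xi)\right)
\]
for $\alpha\in\segoc{0}{1}$ and $\xi\neq 0$. Multiplying the two lower bounds yields the pointwise estimate
\[
\mod{1-\hat\varphi(\alpha\xi)}^2\geq
\nu_\circ m_\alpha\left(\norm{\xi}^{2s}\1e_{B_{1/\alpha}}(\xi)+
\frac{1}{M_\alpha}\1e_{B_{1/\alpha}^c}(\xi)\right),
\]
and integrating against $\mod{\hat u(\xi)}^2$ gives~\eqref{minoration-I-C-u-L2} (the origin, a null set, is harmless).

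There is essentially no obstacle here: the corollary is a direct bookkeeping consequence of part~(iii) of the lemma combined with Plancherel and the homogeneity of the radial scaling. The only points requiring a word of care are checking that $\hat\varphi_\alpha(\xi)=\hat\varphi(\alpha\xi)$ under the chosen normalization of the Fourier transform, and noting that $\mod{1-\hat\varphi(\alpha\xi/\norm{\xi})}^2$ depends on $\xi$ only through its direction, so that bounding it below by the \emph{minimum} $m_\alpha$ over the unit sphere is legitimate. Everything else is the pointwise inequality of Lemma~\ref{from-alibaud-et-al}(iii) integrated.
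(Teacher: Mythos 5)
Your argument is correct and is essentially identical to the paper's own proof: both pass to the Fourier side via Plancherel, insert the factor $\mod{1-\hat\varphi(\alpha\xi/\norm{\xi})}^2$, bound it below by $m_\alpha$ using its definition as a minimum over the unit sphere, and apply the left-hand inequality of Lemma~\ref{from-alibaud-et-al}(iii) to the remaining ratio. Your version merely makes explicit a few details (nonvanishing of the denominator, negligibility of the origin) that the paper leaves implicit.
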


\begin{Proof}
From Lemma~\ref{from-alibaud-et-al}, we have:
\begin{eqnarray*}
\norm{(I-C_\alpha)u}_{L^2(\Re^3)}^2
&=&
\int_{\Re^3}\mod{1-\hat{\varphi}(\alpha\xi)}^2\mod{\hat{u}(\xi)}^2\ud\xi\\
&=&
\int_{\Re^3}\mod{1-\hat{\varphi}(\alpha\xi/\norm{\xi})}^2
\frac{\mod{1-\hat{\varphi}(\alpha\xi)}^2}{\mod{1-\hat{\varphi}(\alpha\xi/\norm{\xi})}^2}
\mod{\hat{u}(\xi)}^2\ud\xi\\
&\geq&
m_\alpha\nu_\circ
\int_{\Re^3}\left(\norm{\xi}^{2s}\1e_{B_{1/\alpha}}(\xi)+
\frac{1}{M_\alpha}\1e_{B_{1/\alpha}^c}(\xi)\right)\mod{\hat{u}(\xi)}^2\ud\xi.
\end{eqnarray*}
\end{Proof}

\begin{Lemma}\sf
\label{minoration-(I-C)u}
Let $C_\alpha$ be the operator of convolution with $\varphi_\alpha$,
where $\varphi$ is as in Lemma~\ref{from-alibaud-et-al}.
There exists a positive constant $A_\circ$, depending on $V$ only, such that
for every $s\geq 0$ and every $u\in H^s(V)$,
\begin{equation}
\label{eq-minoration-(I-C)u}
A_\circ m_\alpha\norm{u}_{H^s(\Re^3)}^2\leq
\norm{(I-C_\alpha)u}_{H^s(\Re^3)}^2.
\end{equation}
\end{Lemma}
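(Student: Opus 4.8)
The plan is to transport the estimate to the Fourier side and reduce it to Corollary~\ref{corollary-to-alibaud-et-al}, the only real work lying in the low-frequency regime. The inequality is meaningful, and is used in the sequel, only for functions whose support is contained in the fixed bounded set $V_\varepsilon$ (in the applications $u$ is of the form $Ew$, so $\supp u\subset V_\varepsilon$); accordingly I would prove it for every $u\in H^s(\Re^3)$ with $\supp u\subset V_\varepsilon$. Write $\Lambda\eqd(1-\Delta)^{1/2}$, so that $\norm{f}_{H^s(\Re^3)}=\norm{\Lambda^s f}_{L^2(\Re^3)}$ and $\widehat{\Lambda^s f}(\xi)=(1+\norm{\xi}^2)^{s/2}\hat f(\xi)$. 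Since $C_\alpha$ and $\Lambda^s$ are Fourier multipliers they commute, whence $\norm{(I-C_\alpha)u}_{H^s(\Re^3)}^2=\norm{(I-C_\alpha)\Lambda^s u}_{L^2(\Re^3)}^2$, and applying Corollary~\ref{corollary-to-alibaud-et-al} to $\Lambda^s u\in L^2(\Re^3)$ gives
\[
\norm{(I-C_\alpha)u}_{H^s(\Re^3)}^2\ \ge\ \nu_\circ m_\alpha\int_{\Re^3}\omega_\alpha(\xi)\,(1+\norm{\xi}^2)^s\,\mod{\hat u(\xi)}^2\ud\xi,
\]
where $\omega_\alpha(\xi)=\norm{\xi}^{2\sigma}\1e_{B_{1/\alpha}}(\xi)+M_\alpha^{-1}\1e_{B_{1/\alpha}^c}(\xi)$, and $\sigma>0$ is the exponent of Lemma~\ref{from-alibaud-et-al} (renamed to avoid a clash with the Sobolev index $s$). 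Since $\alpha\le1$ one has $B_1\subset B_{1/\alpha}$, hence $\omega_\alpha=\norm{\xi}^{2\sigma}$ on $B_1$; and on $B_1^c$ one has $\norm{\xi}^{2\sigma}\ge1$ together with $M_\alpha^{-1}\ge(1+\norm{\varphi}_1)^{-2}\eqd c_\varphi$ by Lemma~\ref{from-alibaud-et-al}(i) (and $c_\varphi<1$), so $\omega_\alpha\ge c_\varphi$ on $B_1^c$. It therefore remains to bound $\int_{\Re^3}\omega_\alpha(1+\norm{\xi}^2)^s\mod{\hat u}^2\ud\xi$ from below by $A\norm{u}_{H^s(\Re^3)}^2$ for some $A$ independent of $\alpha$.

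The only nonroutine point is that $\omega_\alpha$ vanishes like $\norm{\xi}^{2\sigma}$ at the origin, so the low-frequency part of the integral is not by itself comparable to $\int_{B_1}(1+\norm{\xi}^2)^s\mod{\hat u}^2$; this is exactly where the support hypothesis enters. I would first establish the auxiliary inequality: there is a constant $C_*$, depending only on $V$ (and on $\sigma$), such that for every $w\in L^2(\Re^3)$ with $\supp w\subset V_\varepsilon$,
\[
\norm{w}_{L^2(\Re^3)}^2\ \le\ C_*\Bigl(\int_{B_1}\norm{\xi}^{2\sigma}\mod{\hat w(\xi)}^2\ud\xi+\int_{B_1^c}\mod{\hat w(\xi)}^2\ud\xi\Bigr).
\]
Its proof is elementary: $\supp w\subset V_\varepsilon$ forces $w\in L^1$ with $\norm{w}_{L^1}\le\mod{V_\varepsilon}^{1/2}\norm{w}_{L^2}$, so $\norm{\hat w}_\infty\le C\mod{V_\varepsilon}^{1/2}\norm{w}_{L^2}$ for an absolute constant $C$; split $B_1=B_\rho\cup(B_1\setminus B_\rho)$ with $\rho\in\segoc{0}{1}$, bound $\int_{B_\rho}\mod{\hat w}^2\le\mod{B_\rho}\,\norm{\hat w}_\infty^2\le C'\rho^3\mod{V_\varepsilon}\norm{w}_{L^2}^2$ and $\int_{B_1\setminus B_\rho}\mod{\hat w}^2\le\rho^{-2\sigma}\int_{B_1}\norm{\xi}^{2\sigma}\mod{\hat w}^2\ud\xi$, then choose $\rho$ small enough (depending only on $\mod{V_\varepsilon}$, hence on $V$) that $C'\rho^3\mod{V_\varepsilon}\le\tfrac12$, and absorb the $B_\rho$ term into the left-hand side using Plancherel. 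Conceptually this just says that an $L^2$ function supported in a set of finite measure cannot have its Fourier transform concentrated near the origin.

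To conclude I would split $\int_{\Re^3}\omega_\alpha(1+\norm{\xi}^2)^s\mod{\hat u}^2$ at $\norm{\xi}=1$. On $B_1^c$ the integrand dominates $c_\varphi(1+\norm{\xi}^2)^s\mod{\hat u}^2$. On $B_1$ it equals $\norm{\xi}^{2\sigma}(1+\norm{\xi}^2)^s\mod{\hat u}^2\ge\norm{\xi}^{2\sigma}\mod{\hat u}^2$, and since $1\le(1+\norm{\xi}^2)^s\le2^s$ there, the auxiliary inequality applied with $w=u$ gives
\[
\int_{B_1}(1+\norm{\xi}^2)^s\mod{\hat u}^2\ \le\ 2^s\norm{u}_{L^2}^2\ \le\ 2^sC_*\Bigl(\int_{B_1}\norm{\xi}^{2\sigma}(1+\norm{\xi}^2)^s\mod{\hat u}^2+\int_{B_1^c}(1+\norm{\xi}^2)^s\mod{\hat u}^2\Bigr).
\]
Combining the two regimes and collecting constants yields $\int_{\Re^3}\omega_\alpha(1+\norm{\xi}^2)^s\mod{\hat u}^2\ge A\norm{u}_{H^s(\Re^3)}^2$ with, e.g., $A=c_\varphi/(2^sC_*+1)$; together with the first display this proves \eqref{eq-minoration-(I-C)u} with $A_\circ\eqd\nu_\circ A$. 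The constant so obtained depends only on $V$ and $\varphi$ apart from the factor $2^s$, hence can be taken uniform for $s$ in any bounded range, which is all that is used below (there $s\le3$). The genuine obstacle in the whole argument is the low-frequency bound of the second paragraph; everything else is bookkeeping with Fourier multipliers.
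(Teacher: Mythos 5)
Your proof is correct and, at the top level, follows the same strategy as the paper's: transport the estimate to the Fourier side via Corollary~\ref{corollary-to-alibaud-et-al}, and then use the compact support of $u$ to show that the low frequencies cannot carry the whole $L^2$ mass. Two differences are worth recording. First, where the paper disposes of the low-frequency regime in one line by invoking the bounded invertibility of the Fourier truncation $T_{B_1^c}=\1e_{B_1^c}F$ restricted to functions supported in a fixed bounded set (a true but non-elementary fact about space-limited functions; note the exponent there should read $\norm{T_{B_1^c}^{-1}}^{-2}$), you prove an explicit quantitative substitute: the bound $\norm{\hat w}_\infty\leq\mod{V_\varepsilon}^{1/2}\norm{w}_{L^2}$ followed by absorption of the contribution of a small ball $B_\rho$. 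This is more elementary and produces a computable constant. Second, and more substantively, your passage from $s=0$ to $s>0$ genuinely diverges from the paper's: the paper applies its $s=0$ inequality to $F^{-1}\big((1+\norm{\xi}^2)^{s/2}\hat u\big)=\Lambda^s u$, but that function is in general \emph{not} compactly supported, whereas the $s=0$ constant $A_\circ$ was obtained precisely by exploiting the support of its argument; as written, that step needs further justification. You avoid the issue by keeping the weight $(1+\norm{\xi}^2)^s$ inside the integral and applying the support argument to $u$ itself, at the price of a factor $2^s$ in the constant. The resulting loss of uniformity in $s$ is immaterial, since the lemma is only invoked with $s\leq 3$ in the proof of Theorem~\ref{maintheorem}. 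Finally, your explicit restriction to $\supp u\subset V_\varepsilon$ is the right reading of the (slightly ambiguous) hypothesis $u\in H^s(V)$ paired with the norm $\norm{u}_{H^s(\Re^3)}$, and it covers every application of the lemma, where the argument is always of the form $Ew$.
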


\begin{Proof}
We start with the case $s=0$. 
From~\eqref{minoration-I-C-u-L2}, we have:
\begin{eqnarray*}
\norm{(I-C_\alpha)u}_{L^2(\Re^3)}^2
&\geq&
m_\alpha \nu_\circ
\int_{\Re^3}\left(\norm{\xi}^{2s}\1e_{B_{1/\alpha}\setminus B_1}(\xi)+
\frac{1}{M_\alpha}\1e_{B_{1/\alpha}^c}(\xi)\right)\mod{\hat{u}(\xi)}^2\ud\xi\\
&\geq&
m_\alpha \nu_\circ
(1+\norm{\varphi}_1)^{-2}
\int_{\Re^3}\1e_{B_1^c}(\xi)\mod{\hat{u}(\xi)}^2\ud\xi\\
&\geq&
m_\alpha \nu_\circ
(1+\norm{\varphi}_1)^{-2}
\norm{T_{B_1^c}^{-1}}^2\norm{u}_{L^2(\Re^3)}^2,
\end{eqnarray*}
in which $T_{B_1^c}\eqd\1e_{B_1^c}F$, the operator of Fourier truncation to $B_1^c$.
Thus
$$
A_\circ=\nu_\circ(1+\norm{\varphi}_1)^{-2}\norm{T_{B_1^c}^{-1}}^2
$$
is suitable. Notice that, for $s=0$, Parseval's identity
enables to rewrite~ \eqref{eq-minoration-(I-C)u} in the form
\begin{equation}
\label{eq-minoration-(I-C)u-bis}
A_\circ m_\alpha\norm{\hat u}_{L^2(\Re^3)}^2\leq
\norm{(1-\hat\varphi(\alpha\cdot)\hat u}_{L^2(\Re^3)}^2,
\end{equation}
in which $F$ denotes the Fourier-Plancherel operator.
Now, let $s>0$ and assume that $u\in H^s(V)$.
We readily see that
$F^{-1}\big((1+\norm{\xi}^2)^{s/2}\hat u(\xi)\big)$ belongs
to $L^2(\Re^3)$. Applying~\eqref{eq-minoration-(I-C)u-bis}
to the latter function yields
$$
A_\circ m_\alpha\norm{(1+\norm{\cdot}^2)^{s/2} \hat u}_{L^2(\Re^3)}^2\leq
\norm{(1-\hat\varphi(\alpha\cdot))(1+\norm{\cdot}^2)^{s/2} \hat u}_{L^2(\Re^3)}^2,
$$
and \eqref{eq-minoration-(I-C)u} follows.~\eop
\end{Proof}

\begin{Lemma}\sf
\label{majoration-(I-C)u}
Let $C_\alpha$ be as in the previous lemma,  $s>0$, and  $ s_\circ\geq 0$.
If $u\in H^{s_\circ+s}(\Re^3)$, then 
$$
\norm{(I-C_\alpha)u}_{H^{s_\circ}(\Re^3)}^2\leq
\mu_\circ M_\alpha\norm{u}_{H^{s_\circ+s}(\Re^3)}^2,
$$
with $\mu_\circ$ is the positive constant provided
by Lemma~\ref{from-alibaud-et-al}.
\end{Lemma}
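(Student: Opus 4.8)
The plan is to pass to the Fourier side, exactly as in the proof of Lemma~\ref{minoration-(I-C)u}. Since $\widehat{(I-C_\alpha)u}(\xi)=(1-\hat\varphi(\alpha\xi))\hat u(\xi)$, Plancherel's identity gives
$$
\norm{(I-C_\alpha)u}_{H^{s_\circ}(\Re^3)}^2
=\int_{\Re^3}(1+\norm{\xi}^2)^{s_\circ}\mod{1-\hat\varphi(\alpha\xi)}^2\mod{\hat u(\xi)}^2\ud\xi ,
$$
so the whole matter reduces to a pointwise bound on the Fourier multiplier $\mod{1-\hat\varphi(\alpha\xi)}^2$.

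The key step is to read off this bound from the right-hand inequality of Lemma~\ref{from-alibaud-et-al}(iii). For $\xi\neq 0$ it reads
$$
\mod{1-\hat\varphi(\alpha\xi)}^2\leq\mu_\circ\norm{\xi}^{2s}\,\mod{1-\hat\varphi(\alpha\xi/\norm{\xi})}^2 ,
$$
and since $\xi/\norm{\xi}$ lies on the unit sphere, $\mod{1-\hat\varphi(\alpha\xi/\norm{\xi})}^2\leq M_\alpha$ by the very definition of $M_\alpha$. Hence $\mod{1-\hat\varphi(\alpha\xi)}^2\leq\mu_\circ M_\alpha\norm{\xi}^{2s}$ for every $\xi\neq 0$; at $\xi=0$ the multiplier vanishes because $\hat\varphi(0)=1$, so the integrand is zero there and the estimate is harmless.

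Inserting this into the integral and using the elementary inequality $\norm{\xi}^{2s}\leq(1+\norm{\xi}^2)^{s}$ (valid for $s>0$), we obtain
$$
\norm{(I-C_\alpha)u}_{H^{s_\circ}(\Re^3)}^2
\leq\mu_\circ M_\alpha\int_{\Re^3}(1+\norm{\xi}^2)^{s_\circ+s}\mod{\hat u(\xi)}^2\ud\xi
=\mu_\circ M_\alpha\norm{u}_{H^{s_\circ+s}(\Re^3)}^2 ,
$$
which is the assertion. There is no genuine obstacle here: the only points deserving a little care are recognizing that the denominator appearing in Lemma~\ref{from-alibaud-et-al}(iii) is bounded by $M_\alpha$, and that the weight $\norm{\xi}^{2s}$ combines with $(1+\norm{\xi}^2)^{s_\circ}$ to reproduce precisely the $H^{s_\circ+s}$-norm. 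In particular the hypothesis $u\in H^{s_\circ+s}(\Re^3)$ already makes the right-hand side finite, so—unlike in the lower bound of Lemma~\ref{minoration-(I-C)u}—no Fourier-truncation or regularization device is needed.
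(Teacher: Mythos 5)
Your proposal is correct and follows essentially the same route as the paper's own proof: both pass to the Fourier side, factor the multiplier as $\mod{1-\hat\varphi(\alpha\xi/\norm{\xi})}^2$ times the ratio controlled by Lemma~\ref{from-alibaud-et-al}(iii), bound these by $M_\alpha$ and $\mu_\circ\norm{\xi}^{2s}$ respectively, and conclude with $\norm{\xi}^{2s}\leq(1+\norm{\xi}^2)^{s}$. Your extra remark about the integrand vanishing at $\xi=0$ is a harmless refinement the paper leaves implicit.
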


\begin{Proof}
We have:
\begin{eqnarray*}
\norm{(I-C_\alpha)u}_{H^{s_\circ}(\Re^3)}^2
&=&
\int_{\Re^3}\mod{1-\hat{\varphi}(\alpha\xi/\norm{\xi})}^2
\frac{\mod{1-\hat{\varphi}(\alpha\xi)}^2}{\mod{1-\hat{\varphi}(\alpha\xi/\norm{\xi})}^2}
(1+\norm{\xi}^2)^{s_\circ}\mod{\hat{u}(\xi)}^2\ud\xi\\
&\leq&
\mu_\circ M_\alpha\int_{\Re^3}
\norm{\xi}^{2s}(1+\norm{\xi}^2)^{s_\circ}\mod{\hat{u}(\xi)}^2\ud\xi\\
&\leq&
\mu_\circ M_\alpha\int_{\Re^3}
(1+\norm{\xi}^2)^{s_\circ+s}\mod{\hat{u}(\xi)}^2\ud\xi\\
&=&
\mu_\circ M_\alpha
\norm{u}_{H^{s_\circ+s}(\Re^3)}^2,
\end{eqnarray*}
in which the first inequality stems from Lemma~\ref{from-alibaud-et-al}.~\eop
\end{Proof}

\begin{Theorem}\sf \label{maintheorem}
Assume $u^\dagger\in H^{2+s}(\Re^3)$, with $s\in (0,1)$, and  $v=Tu^\dagger_{|V}$, so that $u^\dagger_{|V}=T^\dagger v$.
Let $u_\alpha$ be the solution to Problem~$(\P)$.
Then $u_\alpha\to T^\dagger v$ in $H^2(V)$ as $\alpha\downarrow 0$.
\end{Theorem}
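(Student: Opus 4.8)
The plan is to show that the sequence $(u_\alpha)$ is bounded in $H^2(V)$, extract a weakly convergent subsequence, identify the limit as $T^\dagger v$, and then upgrade to strong convergence by a norm-convergence argument. Throughout I will write $u^\dagger$ for $u^\dagger_{|V}=T^\dagger v$ when no confusion arises, and I will use that $Eu^\dagger$ is a compactly supported $H^{2+s}$ extension of $u^\dagger$ (it need not coincide with the given global extension, but the Proposition on $E$ guarantees $E\colon H^{2+s}(V)\to H^{2+s}_0(\Re^3)$ is bounded and invertible, which is what matters).

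First I would establish the \emph{a priori bound}. By optimality of $u_\alpha$, $J_\alpha(u_\alpha,v,T)\leq J_\alpha(u^\dagger,v,T)$. The right-hand side equals $\norm{v-Tu^\dagger}_G^2 + \norm{(I-C_\alpha)Eu^\dagger}_{H^2(\Re^3)}^2 = \norm{(I-C_\alpha)Eu^\dagger}_{H^2(\Re^3)}^2$ since $v=Tu^\dagger$. Applying Lemma~\ref{majoration-(I-C)u} with $s_\circ=2$ and exponent $s$ (legitimate since $Eu^\dagger\in H^{2+s}(\Re^3)$) gives $\norm{(I-C_\alpha)Eu^\dagger}_{H^2(\Re^3)}^2\leq \mu_\circ M_\alpha\norm{Eu^\dagger}_{H^{2+s}(\Re^3)}^2\leq \mu_\circ M_\alpha C_s^2\norm{u^\dagger}_{H^{2+s}(V)}^2$. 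Since $M_\alpha\to 0$ by Lemma~\ref{from-alibaud-et-al}(ii), the right-hand side $\to 0$; call it $\varepsilon_\alpha\to 0$. Hence both $\norm{v-Tu_\alpha}_G^2\leq\varepsilon_\alpha$ and $\norm{(I-C_\alpha)Eu_\alpha}_{H^2(\Re^3)}^2\leq\varepsilon_\alpha$. Now apply Lemma~\ref{minoration-(I-C)u} to $Eu_\alpha\in H^2(\Re^3)$ (with $s$ as the exponent there — note the lemma's constant $A_\circ$ is uniform): $A_\circ m_\alpha\norm{Eu_\alpha}_{H^2(\Re^3)}^2\leq\norm{(I-C_\alpha)Eu_\alpha}_{H^2(\Re^3)}^2\leq\varepsilon_\alpha$, so using the left inequality of the Proposition on $E$, $\norm{u_\alpha}_{H^2(V)}^2\leq\norm{Eu_\alpha}_{H^2(\Re^3)}^2\leq\varepsilon_\alpha/(A_\circ m_\alpha)$. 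The ratio $\varepsilon_\alpha/m_\alpha = \mu_\circ C_s^2\norm{u^\dagger}_{H^{2+s}(V)}^2\,(M_\alpha/m_\alpha)$ stays bounded by Lemma~\ref{from-alibaud-et-al}(ii), so $(u_\alpha)$ is bounded in $H^2(V)$.

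Next, \emph{identification of weak limits}. Let $(u_{\alpha_n})$ be any sequence with $\alpha_n\downarrow 0$; by reflexivity a subsequence converges weakly in $H^2(V)$ to some $\bar u$. Since $T$ is bounded, $Tu_{\alpha_n}\rightharpoonup T\bar u$ in $G$, and since $\norm{v-Tu_{\alpha_n}}_G\to 0$ we get $T\bar u=v$; by injectivity of $T$ (Proposition~\ref{Pinjectivity}) and $Tu^\dagger=v$, $\bar u=u^\dagger=T^\dagger v$. The limit being independent of the subsequence, the whole net $u_\alpha\rightharpoonup T^\dagger v$ weakly in $H^2(V)$. Then for \emph{strong convergence}: weak convergence plus convergence of norms implies strong convergence in the Hilbert space $H^2(V)$, so it suffices to show $\limsup_{\alpha\downarrow 0}\norm{u_\alpha}_{H^2(V)}\leq\norm{u^\dagger}_{H^2(V)}$. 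Here I would pass through the extension: $\norm{u_\alpha}_{H^2(V)}\leq\norm{Eu_\alpha}_{H^2(\Re^3)}$, and I want $\limsup\norm{Eu_\alpha}_{H^2(\Re^3)}\leq\norm{u^\dagger}_{H^2(V)}$. By weak lower semicontinuity applied in the reverse direction one has $\liminf\norm{u_\alpha}_{H^2(V)}\geq\norm{u^\dagger}_{H^2(V)}$ automatically, so the two inequalities pin the limit. To get the $\limsup$ bound, use $\norm{Eu_\alpha}_{H^2}\leq\norm{C_\alpha Eu_\alpha}_{H^2}+\norm{(I-C_\alpha)Eu_\alpha}_{H^2}$; the second term is $\leq\sqrt{\varepsilon_\alpha}\to 0$. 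For the first, $\norm{C_\alpha Eu_\alpha}_{H^2}\leq\norm{\varphi_\alpha}_1\norm{Eu_\alpha}_{H^2}=\norm{\varphi}_1\norm{Eu_\alpha}_{H^2}$ is not good enough if $\norm{\varphi}_1>1$; instead one uses $\widehat{\varphi_\alpha}(\xi)=\hat\varphi(\alpha\xi)\to\hat\varphi(0)=1$ pointwise with $|\hat\varphi(\alpha\xi)|\leq\norm{\varphi}_1$, so $C_\alpha Eu_\alpha = C_\alpha E u^\dagger + C_\alpha E(u_\alpha-u^\dagger)$; since $E(u_\alpha-u^\dagger)\rightharpoonup 0$ and $C_\alpha$ is uniformly bounded on $L^2$ with symbol tending to $1$, one shows $C_\alpha E(u_\alpha - u^\dagger)\rightharpoonup 0$ in $H^2$ and $C_\alpha E u^\dagger\to Eu^\dagger$ strongly in $H^2$ (dominated convergence on the Fourier side), whence $\limsup\norm{C_\alpha Eu_\alpha}_{H^2}\leq\norm{Eu^\dagger}_{H^2}$. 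But this only gives $\norm{Eu^\dagger}_{H^2}$, not $\norm{u^\dagger}_{H^2(V)}$, and $\norm{Eu^\dagger}_{H^2}$ could exceed $\norm{u^\dagger}_{H^2(V)}$ by a factor $C_s$.

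\emph{The main obstacle} is precisely this last gap: the extension operator inflates the norm, so bounding $\limsup\norm{u_\alpha}_{H^2(V)}$ directly through $\norm{Eu_\alpha}_{H^2(\Re^3)}$ loses a constant and does not close. The fix I would pursue is to not use the crude $\norm{u_\alpha}_{H^2(V)}\leq\norm{Eu_\alpha}_{H^2(\Re^3)}$ at the final step but instead argue on $V$ directly: once $u_\alpha\rightharpoonup u^\dagger$ weakly in $H^2(V)$ is established, combine it with the fact that the penalty term forces $Eu_\alpha$ to become ``smooth at scale $\alpha$''; more efficiently, test optimality against $u^\dagger$ again but now exploit that $\norm{v-Tu_\alpha}_G^2\geq 0$ gives $\norm{(I-C_\alpha)Eu_\alpha}_{H^2}^2\leq\norm{(I-C_\alpha)Eu^\dagger}_{H^2}^2$, and expand $\norm{(I-C_\alpha)Eu_\alpha}_{H^2}^2 = \norm{(I-C_\alpha)(Eu_\alpha-Eu^\dagger)}_{H^2}^2 + 2\langle(I-C_\alpha)(Eu_\alpha-Eu^\dagger),(I-C_\alpha)Eu^\dagger\rangle_{H^2} + \norm{(I-C_\alpha)Eu^\dagger}_{H^2}^2$; this yields $\norm{(I-C_\alpha)(Eu_\alpha-Eu^\dagger)}_{H^2}^2\leq -2\langle\cdots\rangle$, and since $Eu_\alpha - Eu^\dagger$ is bounded in $H^2$ while $(I-C_\alpha)Eu^\dagger\to 0$ in $H^2$, the cross term $\to 0$, so $\norm{(I-C_\alpha)(Eu_\alpha-Eu^\dagger)}_{H^2}\to 0$. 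Feeding this into Lemma~\ref{minoration-(I-C)u} gives $A_\circ m_\alpha\norm{Eu_\alpha - Eu^\dagger}_{H^2}^2\leq o(1)$... which unfortunately only says the difference is $o(m_\alpha^{-1/2})$, still not convergence to $0$. So the genuinely delicate point is extracting \emph{strong} $H^2$ convergence from the degenerate coercivity $m_\alpha\to 0$; the resolution is the standard regularization argument: split frequencies at $\norm\xi\leq R$ and $\norm\xi>R$, use Lemma~\ref{from-alibaud-et-al}(iii) to see that on $\norm\xi> R$ the penalty controls $\norm{Eu_\alpha - Eu^\dagger}$ with a constant uniform in $\alpha$ once $\alpha < 1/R$, while on $\norm\xi\leq R$ one uses weak convergence plus compactness of the embedding $H^2(V)\hookrightarrow H^{2-\delta}(V)$ (Rellich) together with the fact that low-frequency content of $(I-C_\alpha)$ vanishes; letting first $\alpha\downarrow 0$ then $R\uparrow\infty$ closes the argument and yields $u_\alpha\to T^\dagger v$ strongly in $H^2(V)$.
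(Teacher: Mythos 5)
Your Steps 1 and 2 --- the a priori bound obtained by testing optimality against $u^\dagger$, combining Lemma~\ref{minoration-(I-C)u}, Lemma~\ref{majoration-(I-C)u} and the boundedness of $M_\alpha/m_\alpha$ from Lemma~\ref{from-alibaud-et-al}(ii), then identifying the weak limit through $\norm{v-Tu_\alpha}_G\to 0$ and the injectivity of $T$ (Proposition~\ref{Pinjectivity}) --- coincide with the paper's. Where you diverge is the upgrade to strong convergence. The paper fixes a multi-index $\beta$ with $\mod{\beta}\leq 2$ and proves $D_\beta Eu_{\alpha_n}\to D_\beta Eu^\dagger_{|V}$ in $L^2(\Re^3)$ by verifying the hypotheses of the Fr\'echet--Kolmogorov theorem: compact supports give the decay at infinity, and the translation estimate $\norm{\T_h D_\beta Eu_n-D_\beta Eu_n}_{L^2(\Re^3)}^2\leq K(\norm{h}^{2s}+M_n)$ is obtained by splitting the Fourier integral at $\norm{\xi}=1/\alpha_n$ and invoking Corollary~\ref{corollary-to-alibaud-et-al} together with the initial estimate. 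Your closing frequency-splitting argument (cut at $\norm{\xi}=R$, control the tail by the penalty, handle low frequencies by weak convergence plus fixed compact support, let $\alpha\downarrow 0$ before $R\uparrow\infty$) is a legitimate alternative that unpacks the same compactness by hand; the two earlier attempts you record (Radon--Riesz via norm convergence, and the cross-term expansion alone) are indeed dead ends for exactly the reasons you give and should simply be deleted.

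One assertion in your final sketch must be repaired: it is not true that on $\norm{\xi}>R$ the penalty controls $Eu_\alpha-Eu^\dagger$ \emph{with a constant uniform in $\alpha$}. By Corollary~\ref{corollary-to-alibaud-et-al}, the coercivity constant on that region is $\nu_\circ m_\alpha\min(R^{2s},1/M_\alpha)$, which degenerates because $m_\alpha\to 0$. The argument closes only because the penalty value is itself quantitatively small: from the optimality inequality (or from your cross-term expansion followed by Cauchy--Schwarz) together with Lemma~\ref{majoration-(I-C)u}, one gets $\norm{(I-C_\alpha)(Eu_\alpha-Eu^\dagger)}_{H^2(\Re^3)}^2\leq 4\mu_\circ M_\alpha\norm{Eu^\dagger}_{H^{2+s}(\Re^3)}^2$, so the high-frequency tail is bounded by a constant times $(M_\alpha/m_\alpha)\max(R^{-2s},M_\alpha)$, and it is the uniform bound on $M_\alpha/m_\alpha$ --- not a uniform coercivity constant --- that makes this $\O(R^{-2s})+\O(M_\alpha)$. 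This is precisely the mechanism the paper uses in its bounds on $I_1$ and $I_2$; with that correction your route is sound.
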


\begin{Proof}
We walk in the steps of the proof of Theorem 11 in~\cite{alibaud2009variational},
which we adapt to the present context. The main differences lie
in that the regularization term uses a Sobolev norm and in that we make use of the
extension operator~$E$ in order to cope with boundary constraints.
In Step~1, we show that the family $(u_\alpha)$ is bounded in $H^2(V)$,
thus weekly compact; in Step~2, we establish the weak convergence of $u_\alpha$
to $T^\dagger v$, and finally in Step~3, we use a compactness argument to show
that the convergence is, in fact, strong.

{\sl Step 1.}
By construction, we have:
\begin{equation}
\label{initial-estimate}
\norm{(I-C_\alpha)Eu_\alpha}_{H^2(\Re^3)}^2\leq
J_\alpha(u_\alpha,v,T)\leq
J_\alpha(u^\dagger_{|V},v,T)\leq
\norm{(I-C_\alpha)Eu^\dagger_{|V}}_{H^2(\Re^3)}^2.
\end{equation}
Using Lemma~\ref{minoration-(I-C)u} and Lemma~\ref{majoration-(I-C)u}, we obtain:
$$
\norm{u_\alpha}_{H^2(V)}^2\leq
\norm{Eu_\alpha}_{H^2(\Re^3)}^2\leq
\frac{\mu_\circ}{A_\circ}\frac{M_\alpha}{m_\alpha}\norm{Eu^\dagger_{|V}}_{H^{2+s}(\Re^3)}^2,
$$
and Lemma~\ref{from-alibaud-et-al}(ii) then shows that
the set $(u_\alpha)_{\alpha\in\segoc{0}{1}}$ is bounded
in $H^2(V)$, therefore is weakly compact.

{\sl Step 2.} Denote $\norm{\cdot}_G$ the natural norm
in the Hilbert space $G$. 
Now, let $(\alpha_n)_n$ be a sequence converging to $0$. There then exists
a subsequence $(u_{\alpha_{n_k}})_k$ which converges weakly in $H^2(V)$.
Let $\tilde{u}$ be the weak limit of this subsequence. We then have:
\begin{eqnarray*}
\norm{v-Tu_{\alpha_{n_k}}}_G^2
&\leq&
J_{\alpha_{n_k}}(u_{\alpha_{n_k}},v,T)\\
&\leq&
J_{\alpha_{n_k}}(u^\dagger_{|V},v,T)\\
&=&
\norm{(I - C_{\alpha_{n_k}})u^\dagger_{|V}}_{H^2(\Re^3)}^2\\
&\leq&
\mu_\circ M_{\alpha_{n_k}}\norm{u^\dagger_{|V}}_{H^{2+s}(\Re^3)}^2 .
\end{eqnarray*}
Since $M_{\alpha_{n_k}}$ goes to zero as $k\to\infty$, 
so does $\norm{v-Tu_{\alpha_{n_k}}}_G^2$.
By weak lower semicontinuity of the norm on the Hilbert space~$G$,
we see that the weak limit $\tilde{u}$ satisfies:
$$
\norm{v-T\tilde{u}}_G^2\leq
\liminf_{k\to\infty}\norm{v-Tu_{\alpha_{n_k}}}_G^2=
\lim_{k\to\infty}\norm{v-Tu_{\alpha_{n_k}}}_G^2=0.
$$
Therefore, $T\tilde{u}=v$ and the injectivity of $T$ (Proposition \ref{Pinjectivity})
implies that $\tilde{u} = u^\dagger$.\\
{\sl Step 3.}
We will show that, for every multi-index $\beta\in\Ne^3$
such that $\mod{\beta}\leq 2$,
\begin{equation}
\label{L2-convergence-DLu}
D_\beta Eu_\alpha\to D_\beta Eu^\dagger_{|V}
\hbox{ in }
L^2(\Re^3)
\hbox{ as }
\alpha\downarrow 0,
\end{equation}
which will imply the announced strong convergence.
Observe first that, by the previous step and the continuity
of~$E$, 
$$
Eu_\alpha\rightharpoonup Eu^\dagger_{|V}
\hbox{ in }
H^2(\Re^3)
\hbox{ as }
\alpha\downarrow 0,
$$
so that, for every multi-index $\beta\in\Ne^3$
such that $\mod{\beta}\leq 2$,
\begin{equation}
\label{Fréchet-Kolmo-1}
D_\beta Eu_\alpha\rightharpoonup D_\beta Eu^\dagger_{|V}
\hbox{ in }
L^2(\Re^3)
\hbox{ as }
\alpha\downarrow 0,
\end{equation}
Fix $\beta\in\set{\beta'\in\Ne^3}{\mod{\beta'}\leq 2}$ and
let $(\alpha_n)_n$ be a sequence converging to $0$,
as in the previous step. For convenience,
let $u_n\eqd u_{\alpha_n}$, $C_n\eqd C_{\alpha_n}$,
$M_n\eqd M_{\alpha_n}$ and $m_n\eqd m_{\alpha_n}$.
Since $Eu_n$ has compact support, it is obvious that
\begin{equation}
\label{Fréchet-Kolmo-2}
\lim_{R\to\infty}\sup_n\int_{\norm{x}\geq R}
\mod{D_\beta Eu_n(x)}^2\ud x=0.
\end{equation}
Now, for every $h\in\Re^3$ and every function~$u$,
let $\T_h u$ denote the translated function $x\mapsto u(x-h)$.
We proceed to show that
\begin{equation}
\label{Fréchet-Kolmo-3}
\sup_n\norm{\T_h D_\beta Eu_n-D_\beta Eu_n}_{L^2(\Re^3)}^2
\to 0
\hbox{ as }
\norm{h}\to 0.
\end{equation}
Together with~\eqref{Fréchet-Kolmo-1} and~\eqref{Fréchet-Kolmo-2},
this will establish~\eqref{L2-convergence-DLu} via
the Fréchet-Kolmogorov Theorem
(see {\it e.g.}~\cite[Theorem 3.8 page 175]{hirsch2012elements}).
We have:
\begin{eqnarray*}
\norm{\T_h D_\beta Eu_n-D_\beta Eu_n}_{L^2(\Re^3)}^2
&=&
\norm{F(\T_h D_\beta Eu_n-D_\beta Eu_n)}_{L^2(\Re^3)}^2\\
&=&
\int\mod{e^{-2i\pi\scal{h}{\xi}}-1}^2
\mod{FD_\beta Eu_n(\xi)}^2\ud\xi\\
&=&
I_1+I_2,
\end{eqnarray*}
in which
\begin{eqnarray*}
I_1
&\eqd&
\int_{\norm{\xi}\leq 1/\alpha_n}\mod{e^{-2i\pi\scal{h}{\xi}}-1}^2
\mod{FD_\beta Eu_n(\xi)}^2\ud\xi,\\
I_2
&\eqd&
\int_{\norm{\xi}> 1/\alpha_n}\mod{e^{-2i\pi\scal{h}{\xi}}-1}^2
\mod{FD_\beta Eu_n(\xi)}^2\ud\xi.
\end{eqnarray*}
We now bound $I_1$ and $I_2$.
On the one hand,
\begin{eqnarray*}
I_1
&=&
\int_{0<\norm{\xi}\leq 1/\alpha_n}
\frac{\mod{e^{-2i\pi\scal{h}{\xi}}-1}^2}{\norm{\xi}^{2s}}
\norm{\xi}^{2s}\mod{FD_\beta Eu_n(\xi)}^2\ud\xi\\
&\leq&
\sup_{\xi\not=0}
\frac{\mod{e^{-2i\pi\scal{h}{\xi}}-1}^2}{\norm{\xi}^{2s}}
\int_{\norm{\xi}\leq 1/\alpha_n}
\norm{\xi}^{2s}\mod{FD_\beta Eu_n(\xi)}^2\ud\xi.
\end{eqnarray*}
Since $\mod{e^{-2i\pi\scal{h}{\xi}}-1}=\O(\norm{\xi})$ and $s\leq 1$,
the above supremum is finite. Let $\gamma_\circ$ denote its value.
Therefore
\begin{eqnarray*}
I_1
&\leq&
\gamma_\circ\norm{h}^{2s}
\left(
\int_{\norm{\xi}\leq 1/\alpha_n}\mod{FD_\beta Eu_n(\xi)}^2\ud\xi+
\int_{\norm{\xi}\leq 1/\alpha_n}\norm{\xi}^{2s}\mod{FD_\beta Eu_n(\xi)}^2\ud\xi
\right)\\
&\leq&
\gamma_\circ\norm{h}^{2s}
\left(
\norm{FD_\beta Eu_n}_{L^2(\Re^3)}^2+
\int_{\norm{\xi}\leq 1/\alpha_n}\norm{\xi}^{2s}\mod{FD_\beta Eu_n(\xi)}^2\ud\xi\right).
\end{eqnarray*}
Since $(u_n)$ is bounded in $H^2(V)$ independently
of~$n$, so is
$(\norm{FD_\beta Eu_n}_{L^2(\Re^3)}^2)$. Moreover,
by using Corollary~\ref{corollary-to-alibaud-et-al}
with $D_\beta Eu_n$ in place of~$u$, we get
\begin{eqnarray*}
\int_{\norm{\xi}\leq 1/\alpha_n}\norm{\xi}^{2s}\mod{FD_\beta Lu_n(\xi)}^2\ud\xi
&\leq&
\frac{1}{\nu_\circ m_n}
\norm{(I-C_n)D_\beta Eu_n}_{L^2(\Re^3)}^2\\
&=&
\frac{1}{\nu_\circ m_n}
\norm{D_\beta(I-C_n)Eu_n}_{L^2(\Re^3)}^2\\
&\leq&
\frac{1}{\nu_\circ m_n}
\norm{(I-C_n)Eu_n}_{H^2(\Re^3)}^2\\
&\leq&
\frac{1}{\nu_\circ m_n}
\norm{(I-C_n)Eu_{|V}^\dagger}_{H^2(\Re^3)}^2\\
&\leq&
\frac{\mu_\circ}{\nu_\circ}\frac{M_n}{m_n}
\norm{Eu_{|V}^\dagger}_{H^{2+s}(\Re^3)}^2,
\end{eqnarray*}
in which the last two inequalities are respectively due to
the inequality~\eqref{initial-estimate} and
Lemma~\ref{majoration-(I-C)u} (with $s_\circ=2$).
It follows that $I_1=\O(\norm{h}^{2s})$.
On the other hand, using again Corollary~\ref{corollary-to-alibaud-et-al} 
with $D_\beta Eu_n$ in place of~$u$, we have:
\begin{eqnarray*}
I_2
&\leq&
4\int_{\norm{\xi}>1/\alpha_n}\mod{FD_\beta Eu_n(\xi)}^2\ud\xi\\
&\leq&
\frac{4}{\nu_\circ}\frac{M_n}{m_n}
\norm{(I-C_n)D_\beta Eu_n}_{L^2(\Re^3)}^2\\
&=&
\frac{4}{\nu_\circ}\frac{M_n}{m_n}
\norm{D_\beta(I-C_n)Eu_n}_{L^2(\Re^3)}^2\\
&\leq&
\frac{4}{\nu_\circ}\frac{M_n}{m_n}
\norm{(I-C_n)Eu_n}_{H^2(\Re^3)}^2\\
&\leq&
\frac{4}{\nu_\circ}\frac{M_n}{m_n}
\norm{(I-C_n)Eu_{|V}^\dagger}_{H^2(\Re^3)}^2\\
&\leq&
4\frac{\mu_\circ}{\nu_\circ}\frac{M_n}{m_n}M_n
\norm{Eu_{|V}^\dagger}_{H^2(\Re^3)}^{2+s},
\end{eqnarray*}
in which the last two inequalities are respectively due to
the inequality~\eqref{initial-estimate} and
Lemma~\ref{majoration-(I-C)u} (with $s_\circ=2$).
It follows that $I_2=\O(M_n)$.
Gathering the obtained bounds on $I_1$ and $I_2$,
we see that there exists a positive constant~$K$ such that
\begin{equation}
\label{estimate-from-I2-I2}
\norm{\T_h D_\beta Eu_n-D_\beta Eu_n}_{L^2(\Re^3)}^2\leq
K\big(\norm{h}^{2s}+M_n\big).
\end{equation}
Now, fix $\e>0$. There exists $n_\circ\in\Ne^*$ such that
for every $n\geq n_\circ$, $M_n\leq\e$.
From~\eqref{estimate-from-I2-I2}, we see that
\begin{eqnarray*}
\lefteqn{\sup_n\norm{\T_h D_\beta Eu_n-D_\beta Eu_n}_{L^2(\Re^3)}^2}\\
&\leq&
\max\big\{
\max_{1\leq n\leq n_\circ}\norm{\T_h D_\beta Eu_n-D_\beta Eu_n}_{L^2(\Re^3)}^2,
K\big(\norm{h}^{2s}+\e\big).
\big\}
\end{eqnarray*}
By the $L^2$-continuity of translation, we have:
$$
\forall n\in\Ne^*,\quad
\norm{\T_h D_\beta Eu_n-D_\beta Eu_n}_{L^2(\Re^3)}^2\to 0
\quad\hbox{as}\quad
\norm{h}\to 0.
$$
Consequently,
$$
\max_{1\leq n\leq n_\circ}\norm{\T_h D_\beta Eu_n-D_\beta Eu_n}_{L^2(\Re^3)}^2
\to 0
\quad\hbox{as}\quad
\norm{h}\to 0,
$$
so that
$$
\limsup_{h\to 0}\sup_n
\norm{\T_h D_\beta Eu_n-D_\beta Eu_n}_{L^2(\Re^3)}^2
\leq K\e.
$$
Since $\e>0$ was arbitrary, \eqref{Fréchet-Kolmo-3}
is established, which achieves the proof.~\eop
\end{Proof}

\begin{Remark}\sf
Notice that the real $s>0$ in Theorem~\ref{maintheorem} can be
taken arbitrary large. The proof is similar, we only need to consider
an extension operator~$E$ that is bounded from $H^{2+s}(V)$
into $H^{2+s}_0(\Re^3)$.
\end{Remark}

\section{Numerical experiments}
\label{discretization}

In this section, we consider two numerical examples
in order to illustrate the accuracy and the efficiency
of our regularization approach in the resolution of the
in-homogeneous Helmholtz equation with non-constant refraction index.

In order to reduce the computational complexity, we consider the system \eqref{pt1}-\eqref{pt2}-\eqref{pt3}
in two dimensions (as in~\cite{hieu2016regularization}) on a rectangular domain as follows:
\begin{align}
u_{xx}(x,y) + u_{yy}(x,y) +k^{2}\eta(x,y)u(x,y)&=S(x,y), &(x,y)\in[a,b]\times[0,1],\label{pt12d}\\
u_{y}(x,0)&=f(x), & x \in[a,b],\label{pt22d}\\
u(x,0)&=g(x), & x \in[a,b].\label{pt32d}
\end{align}
Given the boundary data $f$ and $g$ at $y=0$, we aim at approximating the solution $u(\cdot,y)$ for $y \in(0,1]$.\\

\textbf{Example 1:} For the first example, we set $[a,b] = [-1,1]$, $k =3$ and define the refraction index $\eta$ and the exact solution $u$ as:

\begin{equation*}
\label{def refract index exple 1}
\eta(x,y) = \begin{cases}
2- (x^2 + \frac{(2y-1)^2}{0.8^2})^{1/2} & \text{if} \,\,  x^2 + \frac{(2y-1)^2}{0.8^2} \leq 1,\\
1 & \text{otherwise}
\end{cases}
\end{equation*}
and
\begin{equation*}
    \label{def sol u exple 1}
    u(x,y) = (x-2y+1)\sin{\left(\frac{k}{\sqrt{2}}\left(x+2y-1\right)\right)}.
\end{equation*}
The source term $S$ and the boundary data $f$ and $g$ are defined accordingly:
\begin{equation*}
\begin{cases}
    S(x,y) &= -\frac{5}{2}k^2(x-2y+1)\sin{\left(\frac{k}{\sqrt{2}}\left(x+2y-1\right)\right)} - 3 k \sqrt{2} \cos{\left(\frac{k}{\sqrt{2}}\left(x+2y-1\right)\right)} \\ 
    & \quad + \,\,k^2\eta(x,y)u(x,y),\\
    f(x) &= k \sqrt{2}(x+1) \cos{\left(\frac{k}{\sqrt{2}}\left(x-1\right)\right)} - 2 \sin{\left(\frac{k}{\sqrt{2}}\left(x-1\right)\right)},\\
    g(x) &= (x+1)\sin{\left(\frac{k}{\sqrt{2}}\left(x-1\right)\right)}.
    \end{cases}
\end{equation*}

\textbf{Example 2:} For the second example, we consider a simpler setting where $[a,b] = [-1.5,1.5]$, $k =1$, and define the refraction index $\eta$ (depending only on $y$) and the exact solution $u$ as:

\begin{equation*}
\label{def refract index exple }
\eta(x,y) = 1+ y^2,
\end{equation*}
and
\begin{equation*}
    \label{def sol u exple 2}
    u(x,y) = \frac{4(1 + y)}{\sqrt{2 \pi}} e^{-8x^2}.
\end{equation*}
The source term $S$ and the boundary data $f$ and $g$ are defined accordingly:
\begin{equation*}
\begin{cases}
    S(x,y) = \frac{4(1 + y)}{\sqrt{2 \pi}} e^{-8x^2} \left( 256 x^2 - 15 + y^2) \right),\\
    f(x) = g(x) = \frac{4}{\sqrt{2 \pi}} e^{-8x^2}.
    \end{cases}
\end{equation*}

In both cases, we consider a Gaussian convolution kernel $\varphi_\alpha$ i.e.
\begin{equation}
\label{def conv kernel}
    \varphi_\alpha(x,y)=\frac{1}{\alpha^2 2 \pi}
e^{-\frac{x^2+y^2}{2 \alpha^2}},
\end{equation}
which satisfies the Levy-kernel condition of Lemma \ref{from-alibaud-et-al} with $s=2$.

\subsection*{Discretization setting}

For the discretization of the system \eqref{pt12d}-\eqref{pt22d}-\eqref{pt32d}, we use a finite difference method of order $2$ described as follows.\\

We first define the uniform grid $\Gamma$ on the bounded domain~$[a,b] \times [0,1]$:
$$
\Gamma_j^n = (x_j,y_n) \quad \text{with} \quad 
\begin{cases}
x_j = a + (j-1)\Delta_x,   & j = 1,...,n_x \\
y_n = (n-1)\Delta_y,   & n=1,...,n_y,
\end{cases}
$$
where $\Delta_x$ and $\Delta_y$ are the discretization steps given by
$$
\Delta_x = (b-a)/(n_x-1), \quad \Delta_y = 1/(n_y-1).
$$
We then approximate the second derivatives $u_{xx}$ and $u_{yy}$ by means of the five-point stencil finite-difference scheme and~$u_y$ using a central finite difference and derive the discrete system:
\begin{eqnarray}
\frac{u_{j}^{n+1} - 2 u_j^n + u_{j}^{n-1}}{\Delta_y^2} + \frac{u_{j+1}^{n} - 2 u_j^n + u_{j-1}^{n}}{\Delta_x^2} + k^2\eta_j^n u_j^n
&=&S_j^n \label{e1}\\
\frac{u_{j}^2 - u_{j}^0}{2 \Delta_y}
&=&f_j \label{e2}\\
u_{j}^1&=&g_j.\label{e3}
\end{eqnarray}
where 
$$
u_j^n \approx u(x_j,y_n), \quad \eta_j^n := \eta(x_j,y_n),  \quad S_j^n := S(x_j,y_n), \quad f_j := f(x_j), \quad \text{and} \quad g_j := g(x_j).
$$
Notice that in \eqref{e2}, by using the central difference to approximate $u_y(x_j,0)$, we define additional nodes $\Gamma_j^0 = (x_j,-\Delta_y)$ lying outside the initial domain $[a,b]\times [0,1]$ and consequently have additional unknowns $u_j^0 \approx u(x_j,-\Delta_y)$.

In equations \eqref{e2} and \eqref{e3}, the index $j$ runs from $1$ to $n_x$, while in \eqref{e1}, $j$ ranges from~$2$ to~$n_x-1$ and~$n$ ranges from~$1$ to~$n_y-1$.
At the boundary node along $x$-direction (i.e. $j=1$ and $j=n_x$), $u_{xx}(x_j,y_n)$ is approximated by the second order scheme:
\begin{equation}
    \label{approx second der edge nodes}
    \begin{cases}
    u_{xx}(x_1,y_n) \approx (2u_{1}^{n} - 5 u_2^n + 4u_{3}^{n} - u_4^n )/\Delta_x^2, \\
    u_{xx}(x_{n_x},y_n) \approx (2u_{n_x}^{n} - 5 u_{n_x-1}^n + 4u_{n_x-2}^{n} - u_{n_x-3}^n)/\Delta_x^2.
    \end{cases}
\end{equation}
Hence at the boundary nodes $j=1$ and $j=n_x$, equation \eqref{e1} is replaced respectively by
\begin{eqnarray}
\frac{u_{1}^{n+1} - 2 u_1^n + u_{1}^{n-1}}{\Delta_y^2} + \frac{2u_{1}^{n} - 5 u_2^n + 4u_{3}^{n} - u_4^n }{\Delta_x^2} + k^2\eta_1^n u_j^n
&=&S_1^n,\\
\frac{u_{n_x}^{n+1} - 2 u_{n_x}^n + u_{n_x}^{n-1}}{\Delta_y^2} + \frac{2u_{n_x}^{n} - 5 u_{n_x-1}^n + 4u_{n_x-2}^{n} - u_{n_x-3}^n }{\Delta_x^2} + k^2\eta_{n_x}^n u_j^n
&=&S_{n_x}^n.
\end{eqnarray}
    
In summary, we obtain the following iterative scheme:
\begin{eqnarray}
u_{j}^1&=& g_j\label{ds1}\\
u_{j}^2 - u_{j}^0 &=&  2 \Delta_y f_j\label{ds2}\\
u_{1}^{n+1} -  (\Lambda_1^n - 4\gamma) u_1^n + \gamma \left(-u_{4}^{n}+4u_{3}^{n} -5u_{2}^{n}\right) + u_1^{n-1} &=&\Delta_y^2 S_1^n\label{ds3}\\
u_{j}^{n+1} - \Lambda_j^n u_j^n + u_j^{n-1} + \gamma \left(u_{j+1}^{n}+u_{j-1}^{n}\right) &=&\Delta_y^2 S_j^n\label{ds4} \\
u_{n_x}^{n+1} -  (\Lambda_{n_x}^n - 4\gamma) u_{n_x}^n  + \gamma \left(-u_{n_x-3}^{n}+4u_{n_x-2}^{n} -5u_{n_x-1}^{n}\right)+ u_{n_x}^{n-1} &=&\Delta_y^2 S_{n_x}^n\label{ds5}
\end{eqnarray}
where 
$$
\Lambda_j^n = 2+ 2 \gamma - k^2  \Delta_y^2 \eta_j^n \quad \text{and} \quad \gamma = \Delta_y^2/\Delta_x^2.
$$
In \eqref{ds3},\eqref{ds4},\eqref{ds5}, the index $n$ runs from $1$ to $n_y-1$.
In \eqref{ds4}, the index $j$ runs from $2$ to $n_x-1$.

By defining the column vector
$$
U^n = (u_1^n, u_2^n,...,u_{n_x}^n)^\top, n=0,1,...,n_y.
$$

we can rewrite the discrete system \eqref{ds1}-\eqref{ds5} in the matrix form:
\begin{eqnarray}
U^1 &=& G \label{tt1}\\
U^2 - U^0 &=&2\Delta_z F \label{tt2} \\
U^{2} - A_1 U^1 + U^{0} &=& \Delta_y^2 S^1,\label{tt3} \\
U^{n+1} - A_n U^n + U^{n-1} &=& \Delta_y^2 S^n, n=2,...,n_y-1. \label{tt4}
\end{eqnarray}
where 
$$
G := 
\begin{pmatrix}
g(x_1) \\
g(x_2) \\
\vdots \\
\vdots\\
g(x_{n_x})
\end{pmatrix}
, \quad
F := 
\begin{pmatrix}
f(x_1) \\
f(x_2) \\
\vdots \\
\vdots\\
f(x_{n_x})
\end{pmatrix}
, \quad
S^n := 
\begin{pmatrix}
S(x_1,y_n) \\
S(x_2,y_n) \\
\vdots \\
\vdots\\
S(x_{n_x},y_n)
\end{pmatrix}
, 
$$
and $A_n$ is the nearly tridiagonal matrix defined by
\begin{equation}
\label{def matrix A_n}
A_n = 
\begin{pmatrix}
-\Lambda_1^n + 4 \gamma & -5 \gamma & 4 \gamma & -\gamma & 0   		& \cdots & 0 \\
\gamma  & -\Lambda_2^n &  \gamma  & 0 & 0 & \cdots &  \vdots  \\
0 & \gamma  & -\Lambda_3^n &  \gamma  & 0 & \ddots &\vdots  \\
0  & 0 &  \ddots        & \ddots       &  \ddots       &  \ddots & 0\\
\vdots &  \ddots  & \ddots &  \ddots        & \ddots       &  \ddots       &  0\\
   \vdots  & \cdots &  0 & 0 &     \gamma   &  -\Lambda_{n_x-1}^n  &   \gamma \\
  0 &        \cdots  &   0  &- \gamma& 4 \gamma&  -5 \gamma & -\Lambda_{n_x}^n + 4 \gamma
\end{pmatrix}.
\end{equation}
From \eqref{tt2} and \eqref{tt3}, we can get rid of the additional unknown vector $U^0$ and get the system
\begin{equation}
\label{final equation}
   \begin{cases}
\qquad \qquad \qquad \qquad \,\,   U^1   = G \\
\qquad \qquad 2 U^{2} + A_1 U^1 \,\,= \Delta_y^2 S^1 + 2 \Delta_y F,\\
U^{n+1} + A_n U^n + U^{n-1} = \Delta_y^2 S^n, \qquad n=2,...,n_y-1.
   \end{cases}
\end{equation}

In order to model the noise in the measured data~$f$ and~$g$,
we consider the noisy versions $G_\epsilon$ and $F_\epsilon$ of the vectors~$G$ and~$F$ defined by 
\begin{equation}
    \label{def noise G and F}
    G_\epsilon = G + \epsilon \vartheta, \quad \text{and} \quad F_\epsilon = F + \epsilon \vartheta,
\end{equation}
where $\vartheta$ is a $n_x$-column vector of zero mean drawn using the normal
distribution.

From \eqref{final equation}, we can  rewrite our discrete system into a single matrix equation: 
$$
A U = B_\epsilon,
$$
where $U$, $B_\epsilon$ are $n_x n_y$-column vectors and $A$ is the $n_x n_y \times n_x n_y$ block-triangular matrix respectively defined by
$$
U= \begin{pmatrix}
U^1 \\
U^2\\
U^3 \\
\vdots\\
\vdots \\
U^{n_y-2} \\
U^{n_y-1} \\
U^{n_y} 
\end{pmatrix}, \,\,
B_\epsilon= \begin{pmatrix}
G_\epsilon \\
\Delta_y^2 S^1 + 2 \Delta_y F_\epsilon\\
\Delta_y^2 S^2 \\
\Delta_y^2 S^3 \\
\vdots \\
\vdots \\
\Delta_y^2 S^{n_y-2} \\
\Delta_y^2 S^{n_y-1} 
\end{pmatrix}, \,\,
$$
and 
$$
A = 
\begin{pmatrix}
I_{n_x} & 0 & \cdots & \cdots& \cdots & \cdots& \cdots & 0 \\
A_1 & 2 I_{n_x} & \ddots &  &&& &\vdots\\
I_{n_x} & A_2 & I_{n_x} & \ddots &&&& \vdots\\
0 & I_{n_x} & A_3 & I_{n_x} & \ddots &&& \vdots\\
\vdots & \ddots  & \ddots & \ddots & \ddots &  \ddots && \vdots\\
\vdots & &\ddots  & \ddots & \ddots & \ddots & \ddots &\vdots\\
\vdots &  & &\ddots    & I_{n_x} & A_{n_y-2} & I_{n_x} & 0  \\
0 &\cdots & \cdots & \cdots & 0  & I_{n_x} & A_{n_y-1} & I_{n_x}
\end{pmatrix},
$$
where $I_{n_x}$ is the square identity matrix of size $n_x$ and the matrices $A_n$ are the sub-matrices defined in \eqref{def matrix A_n}.

The regularized solution $U_\alpha^\epsilon$ is defined as the solution of the minimization problem
\begin{eqnarray}
\label{def U_beta xx}
U_\alpha^\epsilon & = & \argmin_{U \in \Re^{n_x n_y}} \nonumber\\ 
&& \left(\norm{A U - B_\epsilon }^2 + \norm{(I-C_\alpha)E u}^2 + \norm{D_x (I-C_\alpha) E U}^2\right.\nonumber\\
&& +\norm{D_y (I-C_\alpha)E U}^2 + \norm{D_{xx} (I-C_\alpha)E U}^2\\
&& \left.+\norm{D_{yy} (I-C_\alpha)EU}^2 + 2\norm{D_{xy} (I-C_\alpha)E U}^2\right),\nonumber
\end{eqnarray}
where $D_x,D_y,D_{xx}$, $D_{yy},D_{xy}$ are discrete versions of the partial differential operators $\partial_x, \partial_y$, $\partial_{xx}, \partial_{yy},\partial_{xy}$, $C_\alpha$ is the matrix approximating the convolution with the function $\varphi_{\alpha}$ defined in \eqref{def conv kernel} and $E$ is the matrix modeling the extension operator. From \eqref{def U_beta xx}, we compute $U_\alpha^\epsilon $ as the solution of the matrix equation
\begin{equation}
\label{def U_beta}
\left[ A^\top A + E^\top(I_{n_xn_y} - C_\alpha)^\top D (I_{n_xn_y} - C_\alpha)E \right] U_\alpha^\epsilon  = A^\top B_\epsilon,
\end{equation}
where $\alpha$ is the regularization parameter, $I_{n_x n_y}$ is the square identity matrix of size $n_x n_y$ and $D$ is the matrix defined by
$$
D = I_{n_xn_y} + D_x^\top D_x +D_y^\top D_y + D_{xx}^\top D_{xx} + D_{yy}^\top D_{yy} + 2 D_{xy}^\top D_{xy}.
$$

\subsection*{Selection of the regularization parameter}

The choice of the regularization parameter $\alpha$ is a crucial step of the regularization. Indeed, the reconstruction error $U - U_{\alpha}^\epsilon $ has two components: the regularization error $U - U_{\alpha}$ (corresponding to exact data) and the data error propagation $ U_\alpha - U_{\alpha}^\epsilon$. The former error is generally monotonically increasing with respect to $\alpha$ and attains its minimum at $\alpha=0$ while the latter error blows up as $\alpha$ goes to $0$ and decreases when $\alpha$ gets larger. Consequently, the reconstruction error norm $||U - U_{\alpha}^\epsilon||$ is minimal in some located region (depending on the noise level $\epsilon$ in the data) where both error terms have approximately the same magnitude. Outside that region, the reconstruction error is dominated by one of the two error terms which leads to an undesirable approximate solution $U_{\alpha}^\epsilon$.



In the following, we consider the heuristic selection
rule~\eqref{discrete alpha quasi opt}-\eqref{discrete quasi-optimality}
which has a similitude with the discrete quasi-optimality rule
\cite{quasi_opt_3,quasi_opt_1,recent_result_quasi_opt,quasi_opt_2} except for
the denominator which in our case is not equal to one.

Let $(\alpha_n)_n$ be a sample of the regularization parameter $\alpha$ on a discrete grid defined as 
\begin{equation}
\label{discrete alpha quasi opt}
\alpha_n := \alpha_0 q^n,\quad \alpha_0 \in (0,\norm{T}^2], \quad 0< q< 1, \quad n =1,...,N_0,
\end{equation}
we consider the parameter $\alpha(\epsilon)^*$ defined by
\begin{equation}
\label{discrete quasi-optimality}
\alpha(\epsilon)^* = \alpha_{n_*}, \quad \text{with} \quad n_* = \argmin_{n \in \{1,...,N_0\}} \frac{\norm{U_{\alpha_{n}}^\epsilon - U_{\alpha_{n+1}}^\epsilon  }}{\alpha_{n}- \alpha_{n+1}}.
\end{equation}

The heuristic behind the rule \eqref{discrete quasi-optimality} is the following:

Indeed, we aim at approximating the best regularization parameter $\alpha(\epsilon)$ (over the chosen grid) which minimizes the reconstruction error norm $||U - U_{\alpha}^\epsilon||$ over the grid $(\alpha_n)_n$, i.e.
$$
\alpha(\epsilon) = \alpha_{n(opt)} \quad \text{with} \quad n(opt) = \argmin_{n}\,\, K(\alpha_n) : = \norm{U - U_{\alpha_n}^\epsilon}.
$$

Given that minimizers of a differentiable function are critical points of that function, provided the function $K$ is differentiable, $\alpha_{n(opt)}$ can be characterized as a minimizer of the absolute value of the derivative of function $K$, that is 
$$
n(opt) \approx \argmin_{n}\,\, \abs{K'(\alpha_n)}.
$$
By approximating the derivative $K'(\alpha_n)$ of the function $K$ at $\alpha_n$ by its growth rates over the grid $(\alpha_n)_n$, we get that
$$
n(opt) \approx \argmin_{n}\,\,\,\,\,
\abs{ \frac{K(\alpha_{n+1}) - K(\alpha_{n})}{\alpha_{n+1} - \alpha_n} }.
$$
However, since the exact solution~$u$ is unknown, we cannot evaluate the function~$K$.
In such a setting, we search a tight upper bound of the function~$K$ and aim at
minimizing that upper bound. Using the triangle inequality, we have
\begin{equation}
\label{ estimate function K}
\abs{ K(\alpha_{n+1}) - K(\alpha_{n})} =
\abs{ \norm{U -U_{\alpha_{n+1}}^\epsilon}  - \norm{U -U_{\alpha_{n}}^\epsilon} }
\leq \norm{U_{\alpha_{n+1}}^\epsilon - U_{\alpha_{n}}^\epsilon}
\end{equation}

Hence from \eqref{ estimate function K}, we get an upper bound of the unknown term $\abs{ K(\alpha_{n+1}) - K(\alpha_{n+1})}$ which is actually computable.

By approximating $\abs{ K(\alpha_{n+1}) - K(\alpha_{n+1})}$ by its upper bound in \eqref{ estimate function K}, we get
$$
n(opt) \approx \argmin_{n}\,\,\,\,\,
\abs{ \frac{\norm{U_{\alpha_{n+1}}^\epsilon - U_{\alpha_{n}}^\epsilon}}{\alpha_{n+1} - \alpha_n} } = \frac{\norm{U_{\alpha_{n+1}}^\epsilon - U_{\alpha_{n}}^\epsilon}}{\alpha_{n} - \alpha_{n+1}} .
$$
which is precisely the definition of our heuristic selection rule \eqref{discrete quasi-optimality}.

To illustrate the efficiency of the selection rule \eqref{discrete quasi-optimality}, on Figure \ref{Fig heur rule exp 1} (resp. Figure \ref{Fig heur rule exp 2}), we exhibit the curve of the reconstruction error along with the selected parameter $\alpha(\epsilon)^*$ for each noise level for Example 1 (resp. Example 2).

\begin{figure}[h!]
\begin{center}
\includegraphics[scale=0.5]{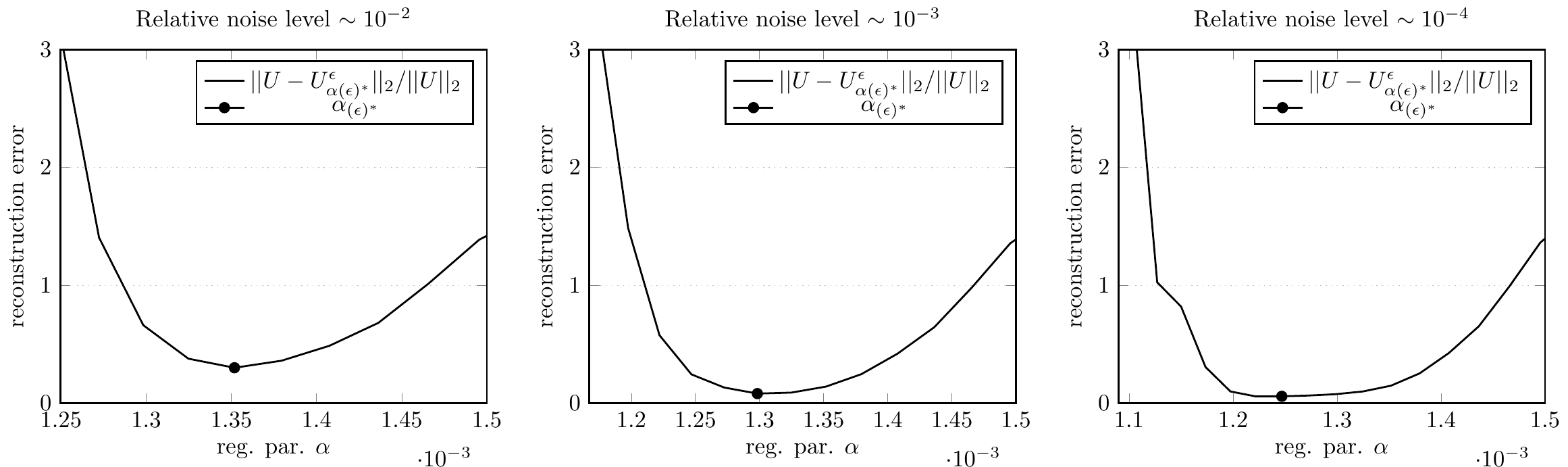} 
\end{center}
\caption{Reconstruction error versus regularization parameter $\alpha$ along with selected regularization parameter $\alpha(\epsilon)^*$ for Example 1.}
\label{Fig heur rule exp 1}
\end{figure}

\begin{figure}[h!]
\begin{center}
\includegraphics[scale=0.5]{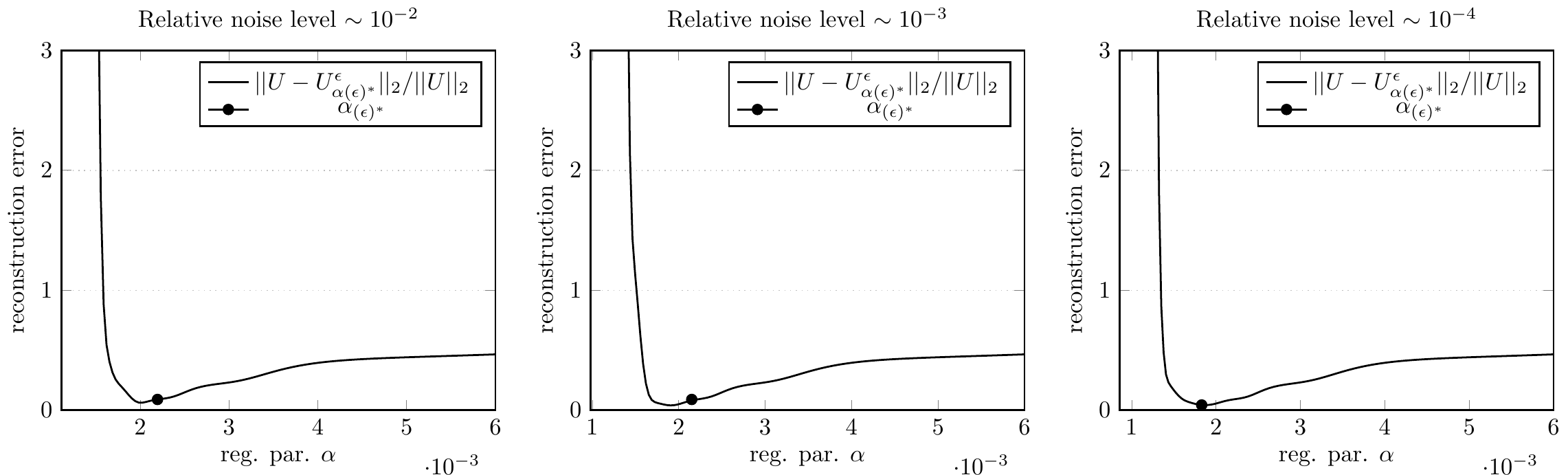} 
\end{center}
\caption{Reconstruction error versus regularization parameter $\alpha$ along with selected regularization parameter $\alpha(\epsilon)^*$ for Example 2.}
\label{Fig heur rule exp 2}
\end{figure}

\subsection*{Results and comments}
In the simulations, we consider three noise
levels $\epsilon_i$ $(i=2,3,4)$
such that the relative error in the data ($Red$) satisfies
$$
Red = \frac{\norm{F-F_{\epsilon_i}}_2}{\norm{F}_2} \approx \frac{\norm{G-G_{\epsilon_i}}_2}{\norm{G}_2} \approx 10^{-i}.
$$
 We choose \texttt{Matlab} as the coding environment and we solve equation \eqref{def U_beta} using a generalised minimal residual method (GMRES) with ortho-normalization based on Householder reflection. We choose as initial guess the solution from the \texttt{Matlab} direct solver \texttt{lmdivide}.
 
Figures~\ref{Figure comp_u_u_beta exple 1} (resp. \ref{Figure comp_u_u_beta exple 2}) compares the exact solution $u$ to the reconstruction $u_{\alpha(\epsilon)^*}^\epsilon$ for each noise level for Example 1 (resp. Example 2). From these Figures, we observe that the reconstruction gets better as the noise level decreases.

On Figures~\ref{Figure comp_u_u_beta_z_075 exple 1} and \ref{Figure comp_u_u_beta_z_1 exple 1} (resp. \ref{Figure comp_u_u_beta_z_075 exple 2} and \ref{Figure comp_u_u_beta_z_1 exple 2}), we compare the exact function~$u$
and the regularized solution $u_{\alpha(\epsilon)^*}^\epsilon$ at $y=0.75$ and $y=1$ for Example 1 (resp. Example 2) for each noise level. 

Table~\ref{Table rel error fct z exple 1} (resp. \ref{Table rel error fct z exple 2}) presents the numerical values of the relative errors 
$$
\frac{\norm{u(\cdot,y) - u_{\alpha(\epsilon)^*}^\epsilon(\cdot,y)}_2}{\norm{u(\cdot,y)}_2}
$$
for $y=0,0.25,0.5,0.75$ and $ y=1$ for Example 1( resp. Example 2) for each noise level. From these Tables, we observe  that the reconstruction error get smaller as $y$ approaches $0$ and as the noise level decreases.

From Figures \ref{Figure comp_u_u_beta exple 1} to \ref{Figure comp_u_u_beta_z_1 exple 2} and Tables \ref{Table rel error fct z exple 1} and \ref{Table rel error fct z exple 2}, we can see that our mollifier regularization approach yields quite good results. Moreover, as predictable, the reconstruction gets better when the noise level decreases and when we get closer to the boundary side where boundary data are given.

\paragraph{Acknowledgement}
The authors are grateful to N. Alibaud for interesting comments
and discussions during the development of the proposed methodology.
They also wish to thank T. Le Minh, for nice and fruitful exchanges.

\begin{figure}[h!]
\begin{center}
\includegraphics[scale=0.5]{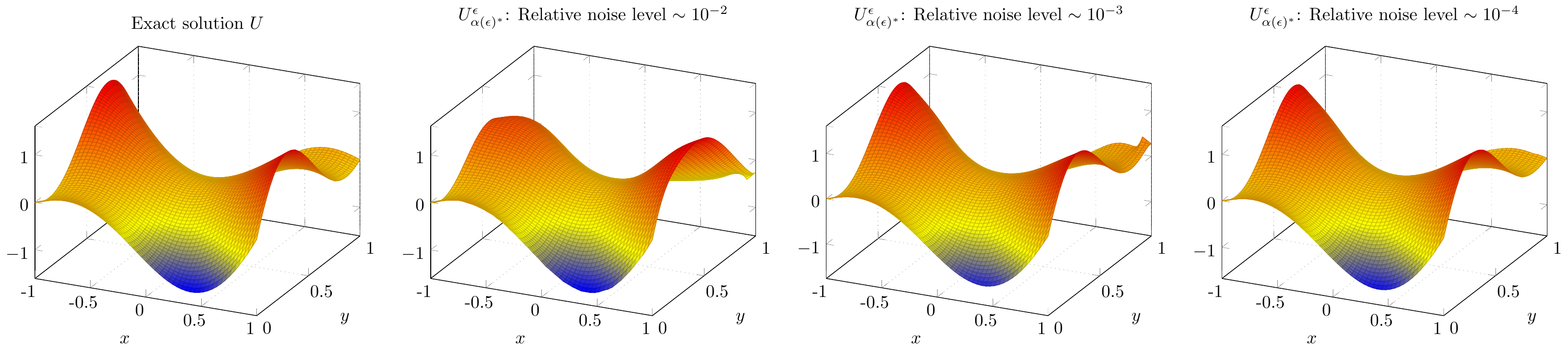} 
\end{center}
\caption{Comparison of the exact solution $u$ of Example 1  and the regularized solution  $u_{\alpha(\epsilon)^*}^\epsilon$ for each noise level.}
\label{Figure comp_u_u_beta exple 1}
\end{figure}

\begin{figure}[h!]
\begin{center}
\includegraphics[scale=0.65]{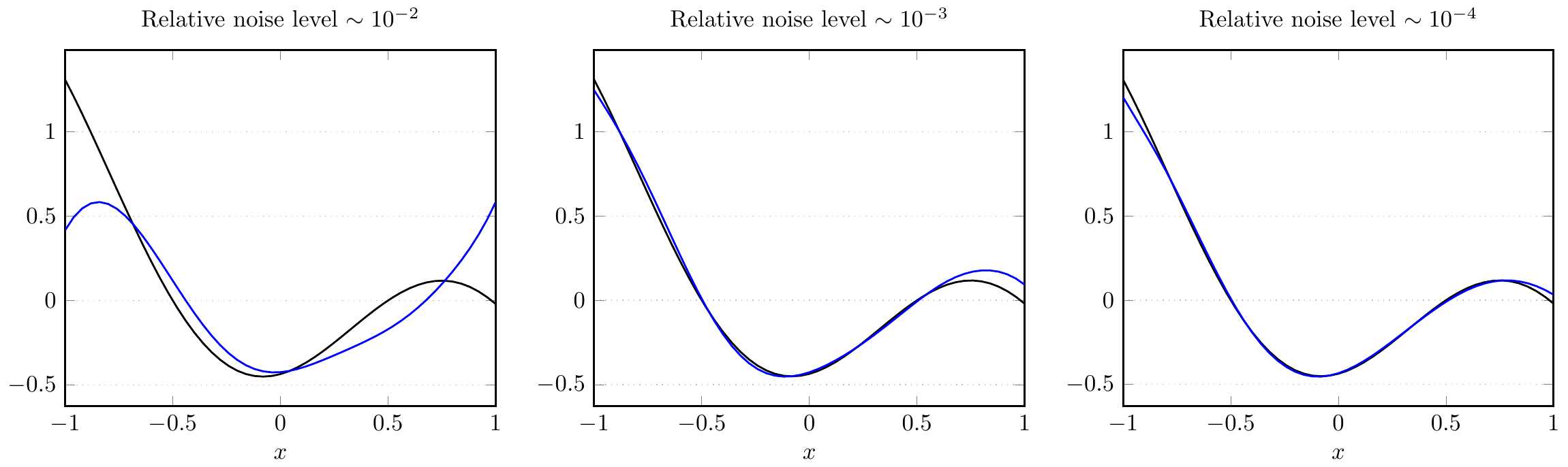} 
\end{center}
\caption{Comparison of the exact solution $u(\cdot,y)$ (black curve) of Example 1 and the regularized solution  $u_{\alpha(\epsilon)^*}^\epsilon(\cdot,y)$ at $y=0.75$ (blue curve) for each noise level.}
\label{Figure comp_u_u_beta_z_075 exple 1}
\end{figure}

\begin{figure}[h!]
\begin{center}
\includegraphics[scale=0.65]{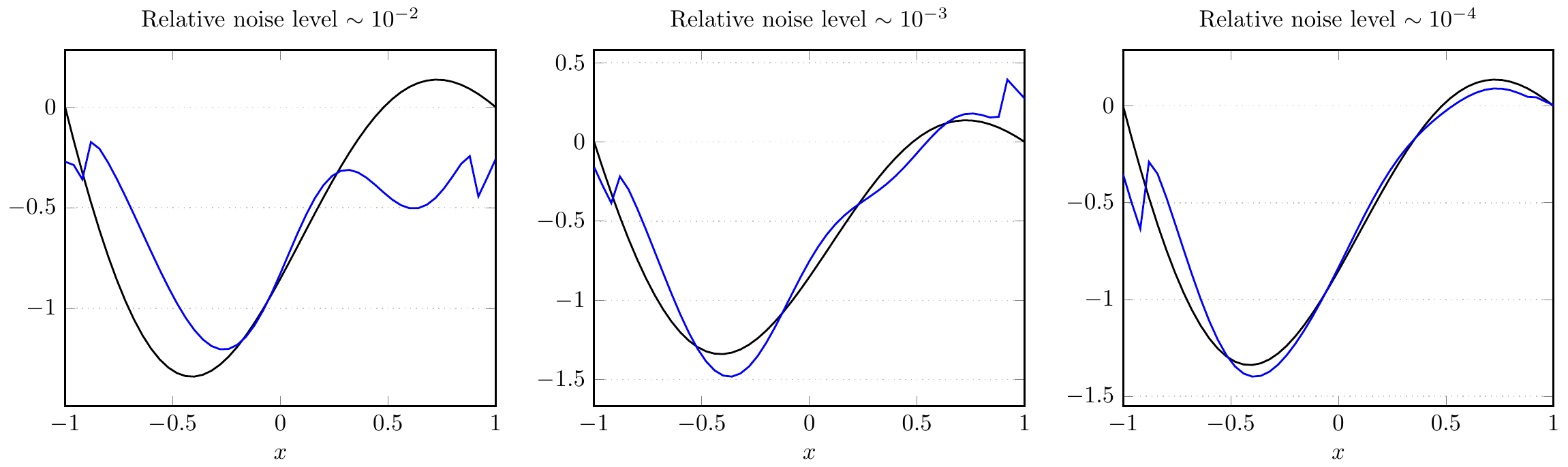} 
\end{center}
\caption{Comparison of the exact solution $u(\cdot,y)$ (black curve) of Example 1 and the regularized solution $u_{\alpha(\epsilon)^*}^\epsilon(\cdot,y)$ (blue curve) at $y=1$ for each noise level.}
\label{Figure comp_u_u_beta_z_1 exple 1}
\end{figure}

\begin{table}[h!]
\begin{center}
\includegraphics[scale=1.5]{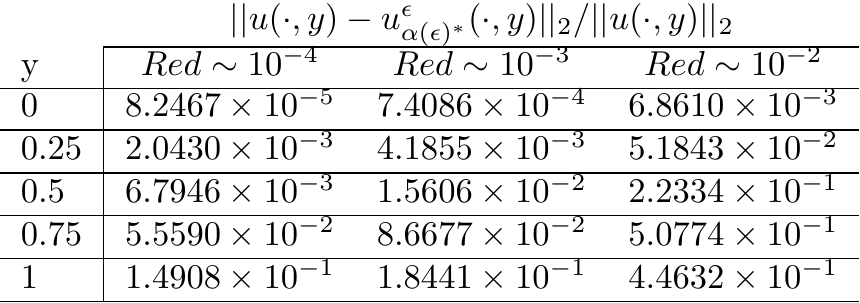} 
\end{center}
\caption{Relative $L^2$ error between the exact solution $u(\cdot,y)$ of Example 1  and the regularized solution $u_{\alpha(\epsilon)^*}^\epsilon(\cdot,y)$ for $y=0,0.25,0.5,0.75,1$.}
\label{Table rel error fct z exple 1}
\end{table}

\begin{figure}[h!]
\begin{center}
\includegraphics[scale=0.5]{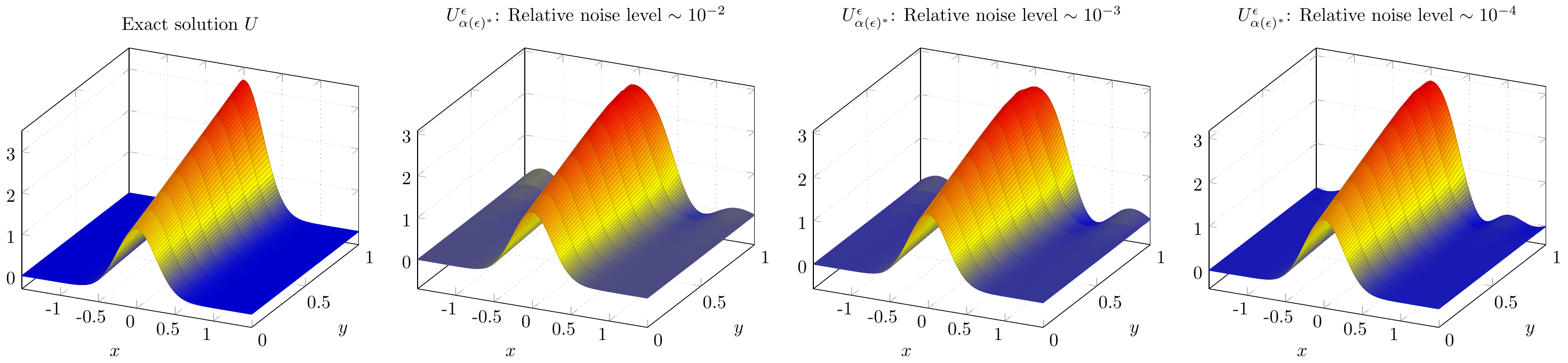} 
\end{center}
\caption{Comparison of the exact solution $u$ of Example 2 and the regularized solution  $u_{\alpha(\epsilon)^*}^\epsilon$ for each noise level.}
\label{Figure comp_u_u_beta exple 2}
\end{figure}

\begin{figure}[h!]
\begin{center}
\includegraphics[scale=0.65]{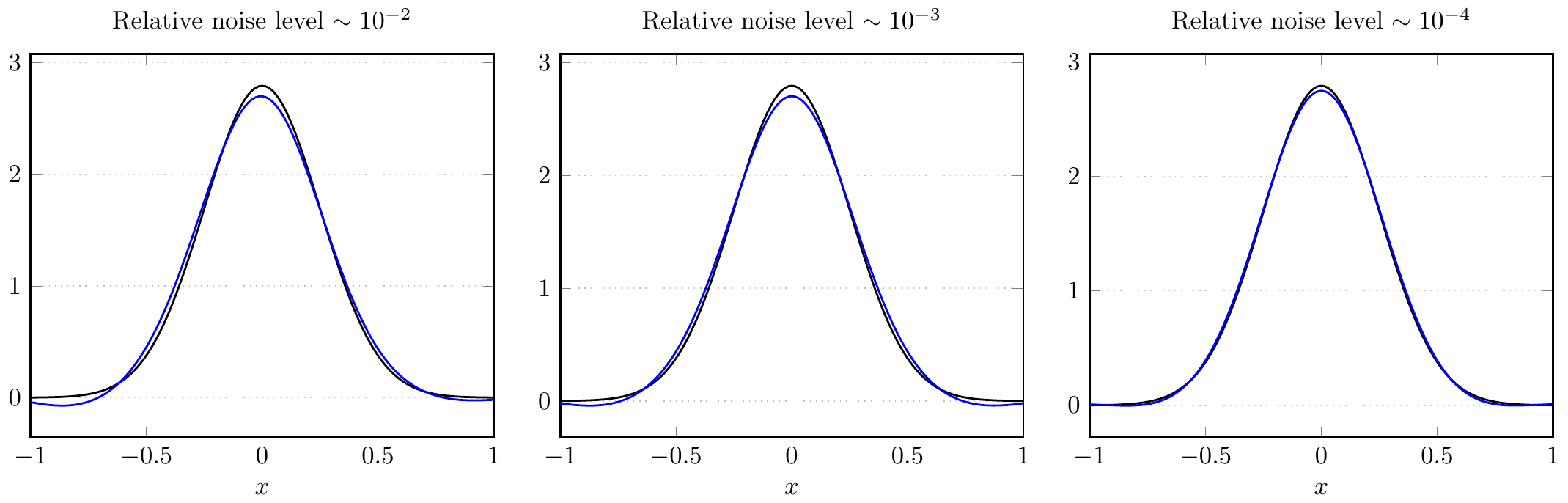} 
\end{center}
\caption{Comparison of the exact solution $u(\cdot,y)$ (black curve) of Example 2 and the regularized solution  $u_{\alpha(\epsilon)^*}^\epsilon(\cdot,y)$ at $y=0.75$ (blue curve) for each noise level.}
\label{Figure comp_u_u_beta_z_075 exple 2}
\end{figure}

\begin{figure}[h!]
\begin{center}
\includegraphics[scale=0.65]{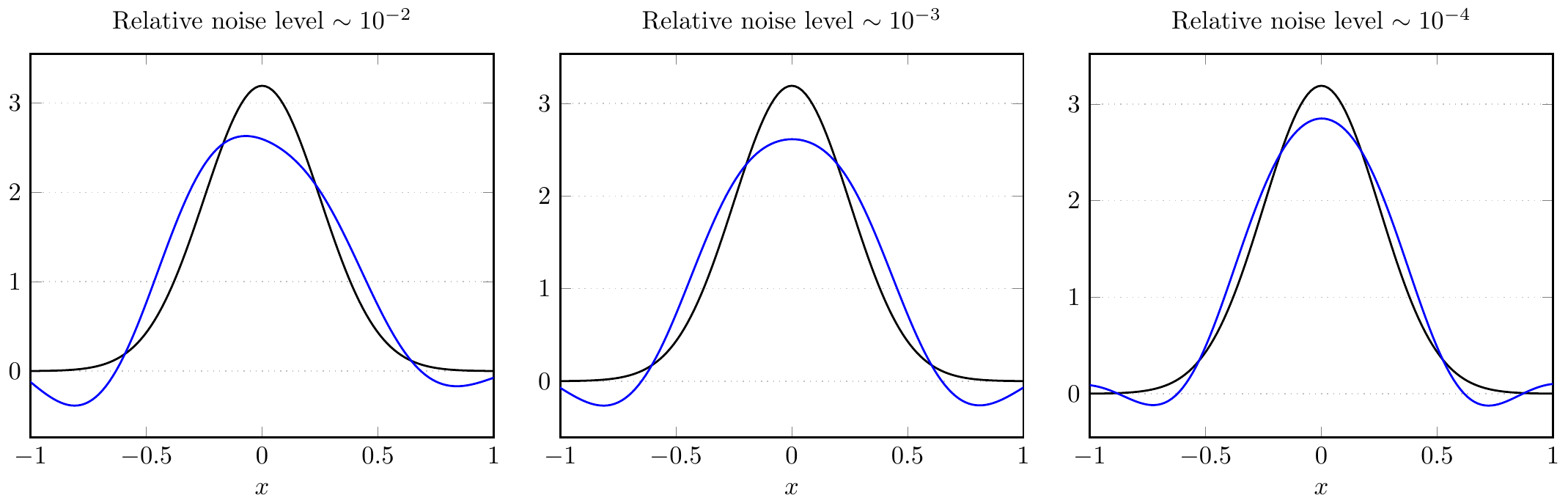} 
\end{center}
\caption{Comparison of the exact solution $u(\cdot,y)$ (black curve) of Example 2 and the regularized solution  $u_{\alpha(\epsilon)^*}^\epsilon(\cdot,y)$ (blue curve) at $y=1$ for each noise level.}
\label{Figure comp_u_u_beta_z_1 exple 2}
\end{figure}

\begin{table}[h!]
\begin{center}
\includegraphics[scale=1.5]{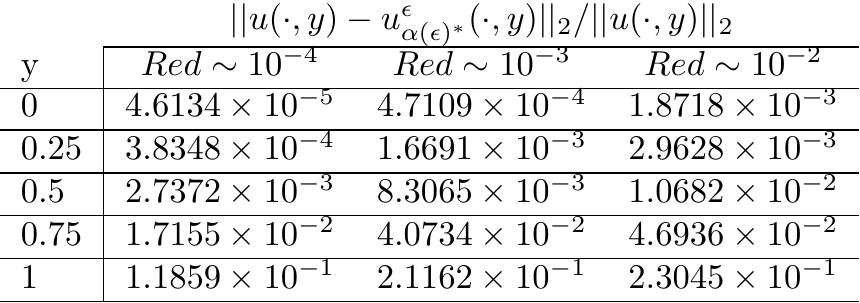} 
\end{center}
\caption{Relative $L^2$ error between the exact
solution $u(\cdot,y)$ of Example 2 and the regularized solution
$u_{\alpha(\epsilon)^*}^\epsilon(\cdot,y)$ for $y=0,0.25,0.5,0.75,1$.}
\label{Table rel error fct z exple 2}
\end{table}

\clearpage
\bibliographystyle{abbrv}
\bibliography{Helmoltz}

\end{document}